\newcommand{\normW}[1]{{\vert\kern-0.25ex\vert\kern-0.25ex\vert #1 
		\vert\kern-0.25ex\vert\kern-0.25ex\vert}}
\newcommand{\normWbig}[1]{{\Big\vert\kern-0.25ex\Big\vert\kern-0.25ex\Big\vert #1 
		\Big\vert\kern-0.25ex\Big\vert\kern-0.25ex\Big\vert}}
\newcommand{\C}{\mathbb{C}}
\newcommand{\N}{\mathbb{N}}
\newcommand{\R}{\mathbb{R}}
\newcommand{\boC}{\mathcal{C}}
\renewcommand{\Re}{\operatorname{Re}}
\renewcommand{\Im}{\operatorname{Im}}
\theoremstyle{plain}
\newtheorem{theorem}{Theorem}[section]
\newtheorem{corollary}[theorem]{Corollary}
\theoremstyle{definition}
\newtheorem{lemma}{Lemma}
\newtheorem{definition}[theorem]{Definition}
\newtheorem{remark}[theorem]{Remark}
\theoremstyle{remark}
\newtheorem*{merci}{Acknowledgments}
\numberwithin{equation}{section}
\begin{document}
%%-----------------------------
%%      the top matter
%%-----------------------------
\renewcommand{\thefootnote}{\fnsymbol{footnote}}
\title{Stability and instability of the quasilinear Gross--Pitaevskii dark solitons}

\author{Erwan Le Quiniou\footnotemark[1]}
\footnotetext[1]{\noindent Univ.\ Lille, CNRS, Inria, UMR 8524 - Laboratoire Paul Painlev\'e, F-59000 Lille, France.\\ E-mail: erwan.lequiniou@univ-lille.fr}
\date{}
\maketitle

%\author{...}\address{...}
%\author{...}\address{...}
%
%\dedicated{\it Dedicated to ma maman} %if necessary
%
\begin{abstract} %We study a quasilinear Schr\"odinger 
	We study a quasilinear Schr\"odinger equation with nonzero conditions at infinity. In previous works, we obtained a continuous branch of traveling waves, given by dark solitons indexed by their speed. Neglecting the quasilinear term, one recovers the Gross--Pitaevskii equation, for which the branch of dark solitons is stable.
	Moreover, Z.~Lin showed that the Vakhitov--Kolokolov~(VK) stability criterion (in terms of the momentum of solitons) holds for general semilinear equations with nonvanishing conditions at infinity. 
	
	In the quasilinear case, we prove that the VK stability criterion still applies, by generalizing Lin's arguments. Therefore, we deduce that the branch of dark solitons is stable for weak quasilinear interactions. For stronger quasilinear interactions, a cusp appears in the energy-momentum diagram, implying the stability of fast waves and the instability of slow waves.  
\end{abstract}

\medskip
\noindent{{\em Keywords:}
	Quasilinear Schr\"odinger equation, Gross--Pitaevskii equation, traveling waves, dark solitons, nonzero conditions at infinity, orbital stability.
	
	\medskip
	\noindent{2020 {\em MSC}}:
	%Research exposition (monographs, survey articles) pertaining to partial differential equations
	%  Secondary            
	% 35-XX PARTIAL DIFFERENTIAL EQUATIONS
	35Q55, %NLS equations (nonlinear SchrÃ¶dinger equations) {For dynamical systems and              ergodic theory, see 37K10}
	%35J10, %  SchrÃ¶dinger operator, SchrÃ¶dinger equation {For ordinary differential                              equations, see 34L40; for operator theory, see 47D08; for quantum theory, see                     81Q05; for statistical mechanics, see 82B44}
	%35L72,  % Quasilinear second order hyperbolic equations
	35C07, % Traveling wave solutions 
	35C08, % Soliton solutions
	35J62,  % Quasilinear elliptic equations
	%       35J40,   	Boundary value problems for higher-order elliptic equations
	%35J60,  % 	Nonlinear elliptic equations
	%		%		35A15 Variational methods				
	%        34A05,      %Explicit solutions, first integrals of ordinary differential equations
	%35J20, % Variational methods for second-order elliptic equations
	%		37K05, %  Hamiltonian structures, symmetries, variational principles, conservation laws
	%       37K45,   %Stability problems for infinite-dimensional Hamiltonian and Lagrangian systems
	35B35,  %Stability in context of PDEs [See also 34Dxx, 37B25, 37C20, 37C75, 37F15, 37J25,                    37L15, 49K40, 58K25, 93Dxx]
	34L40, %Particular ordinary differential operators (Dirac, one-dimensional Schrödinger, etc.)
	34L05, %General spectral theory of ordinary differential operators
	% 35D30, %Strong solutions to PDEs
	% 35D35, % Weak solutions to PDEs
	%		35Q53, %KdV-like equations (Korteweg-de Vries) 
	%   	35R05, %Partial differential equations with discontinuous coefficients or data
	35Q60, %PDEs in connection with optics and electromagnetic theory,
	%		35A01, %Existence problems: global existence, local existence, non-existence
	%	35C06, %Self-similar solutions
	82D50, % Superfluids
	% 	35B45,  %A priori estimates in the context of PDEs
	%	35Q56, %Ginzburg-Landau equations {For optics and electromagnetic theory, see 78A25}
	%	35A02, %Uniqueness problems: global uniqueness, local uniqueness, non-uniqueness
	%	53C44. %Geometric evolution equations (mean curvature flow, Ricci flow, etc.)
	%   37K40,  % Soliton theory, asymptotic behavior of solutions 
	% 34-XX ODE
	%34A34,   	%Nonlinear ordinary differential equations and systems
	% 34A12.   	%Initial value problems, existence, uniqueness, continuous dependence and continuation of solutions to ordinary differential equations
	34B40.  	%Boundary value problems on infinite intervals for ordinary differential equations
	%34C99.  	%None of the above, but in the section "Qualitative theory for ordinary differential equations"
	% AUTRE
	%58E05   	%Abstract critical point theory (Morse theory, Lyusternik-Shnirel'man theory, etc.) in infinite-dimensional spaces
	%%%%%%%%%%%%%%%%%%%%%%%%%%%%%%%%%%%%%%%%%%
	\medskip
	
	%%-----------------------------
	%%      your text
	%%-----------------------------
	\section{Introduction}
	We consider the following quasilinear Gross--Pitaevskii equation in one dimension
	\begin{equation}	\label{QGP}
		i\partial_t\Psi+\partial_{xx}\Psi+\Psi(1-|\Psi|^2)+\kappa\Psi\partial_{xx}(1-|\Psi|^2)=0,\quad\text{ in }\R\times\R,
	\end{equation}
	where $\kappa\in\R$ is a parameter measuring the strength of the quasilinear term, and $\Psi:\R\times\R\to\C$, satisfies the nonzero conditions at infinity
	\begin{equation}
		%	\label{Psi-infinity}
		%	\abs{\Psi}\sim 1, \quad \text{ as }\abs{x}\to \infty,
		\label{nonzero0}
		\lim_{|x|\to \infty}| \Psi(x,\cdot)|=1.
	\end{equation} 
	Equation \eqref{QGP} was introduced as a generalization of the case $\kappa=0$ also known as the Gross-Pitaevskii equation, which is a standard model in nonlinear optics and quantum hydrodynamics \cite{kivshar,fibich,kevFrCa0}. For instance, in the context of optical fibers in \cite{krolikowski2000}, they obtained \eqref{QGP} with $\kappa\geq0$  to model weak nonlocal interactions in the refracting index of the beam of light. We refer to \cite{HorikisWnonloc}, for other relevant weakly nonlocal models. Equation \eqref{QGP} was also obtained with $\kappa\leq0$ in \cite{kurihara} to describe superfluid films with surface tension.

	Equation \eqref{QGP} is a particular case of the following quasilinear Schr\"odinger equation in $\R^d$
	\begin{equation}\label{eq:QLS}
		i\partial_t\Psi+\Delta\Psi+\Psi F(|\Psi|^2)-\kappa\Psi h'(|\Psi|^2)\Delta h(|\Psi|^2)=0,\quad\text{ in }\R^d\times\R,
	\end{equation}
	with $F(y)=1-y$ and $h(y)=1-y$, $\kappa\in\R$. To our knowledge, \eqref{eq:QLS} was only considered with vanishing conditions at infinity by the mathematical community. 
	Studying \eqref{eq:QLS} with $d=1$,  I.~D.~Iliev and K.~P.~Kirchev~\cite{ilievquasilinstab} obtained a continuous branch of solitary-wave solutions of the form
	$\Psi(x,t)=u_\omega(x)e^{i\omega t},$ and showed that the wave is stable whenever $\partial_\omega||u_\omega||_{ L^2(\R)}^2>0$ and unstable when $\partial_\omega||u_\omega||_{ L^2(\R)}^2<0$.
	In arbitrary space dimension, M.~Colin, L.~Jeanjean, and M.~Squassina~\cite{Colin2} obtained the existence and stability or instability by blowup of the ground state solitary-wave for \eqref{eq:QLS} with power law nonlinearities.
	In the semilinear regime ($\kappa=0$ in \eqref{eq:QLS}), we recover
	\begin{equation}\label{eq:GP}
		i\partial_t\Psi+\Delta\Psi+F(|\Psi|^2)\Psi=0,\quad\text{ in }\R^d\times\R.
	\end{equation}
	With vanishing conditions at infinity, M.~I.~Weinstein showed in~\cite{weinstein} the stability of the solitary wave solution to \eqref{eq:GP} satisfying a ground state property using Lyapunov functionals.
	Assuming $F(\rho_0)=0$ and that $\Psi$ satisfies the nonzero conditions at infinity
	\begin{equation}\label{eq:nonzero2}
		\lim_{|x|\to\infty}|\Psi(x,t)|^2=\rho_0, \quad\text{ for all }t\in\R,
	\end{equation}
	Z.~Lin investigated in \cite{linbubbles} in one dimension of space, the stability of nonvanishing traveling-wave solutions to \eqref{eq:GP} of the form,
	\begin{equation}\label{Twans}
		\Psi_{c}(x,t)=u_{c}(x-ct).
	\end{equation}
	In particular, he rigorously proves the VK criterion~\cite{vakhitovcrit} using the framework of abstract Hamiltonian systems presented in the celebrated work of  M.~Grillakis, J.~Shatah, and W.~Strauss \cite{grillakisshatah}.
	We also refer to the work of D.~Chiron in \cite{chiron-stability} which constitutes, to our knowledge, the most complete description of the existing orbital stability results for the nontrivial traveling-waves solutions of equation \eqref{eq:GP} and in particular deals with solutions vanishing at some point. In this work, we explain how the ideas of Lin can be generalized to investigate the stability of traveling-wave solutions to the quasilinear equation \eqref{QGP}. As in \cite{linbubbles}, we will perform the analysis on the hydrodynamical unknowns $\eta, v=\partial_x\theta$ with $\Psi=\sqrt{1-\eta}e^{i\theta}$, so that equation \eqref{QGP} formally yields
	\begin{equation}\label{eq:hydrot}
		\left\{\begin{aligned}
			&\eta_t=\partial_x(2v(1-\eta)),\\
			&v_t=-\partial_x\left(\frac{\eta_{xx}}{2(1-\eta)}+\frac{(\eta_x)^2}{4(1-\eta)^2}+v^2-\eta-\kappa\eta_{xx}\right).
		\end{aligned}\right.
	\end{equation}
	As explained in \cite{delaire2023exotic}, the system \eqref{eq:hydrot} is a particular case of the Euler-Korteweg system, for which the local well posedness was investigated by S. Benzoni-Gavage, R. Danchin and S. Descombes \cite{benzoniLWP} in the space $ H^{s}(\R)\times H^{{s-1}}(\R)$ with $s>2+1/2$. In \cite{benzoniStab} with D. Jamet they showed existence of homoclinic traveling waves and proved orbital stability of these waves when the VK stability criterion is satisfied. In the same setting, C. Audiard \cite{audiard2017} generalized the ideas in \cite{linbubbles} to prove the VK instability criterion. The present work will detail their proof of the VK stability and instability criterion in the particular case of \eqref{QGP}.
	Finally, after this paper was completed, the author found that in \cite{walshStab} K. Varholm, E. Wahl\'en and S. Walsh construct a general framework more suited than \cite{grillakisshatah} for the stability of solutions to hydrodynamical systems such as \eqref{eq:hydrot}.

	The quasilinear Gross--Pitaevskii equation \eqref{QGP} is Hamiltonian and formally preserves the energy
	\begin{equation}\label{eq:energie}
		E_\kappa(\Psi(\cdot,t))= \int_\R |\partial_x\Psi(x,t)|^2 dx +\frac{1}{2}\int_\R
		\left(1-|\Psi(x,t)|^2\right)^2dx-\frac{\kappa}{2}\int_\R\left (\partial_x |\Psi(x,t)| ^2 \right )^2  dx,
	\end{equation}
	as well as the (renormalized) momentum, whenever $\inf_{x\in\R}|\Psi(x,t)|>0$
	\begin{equation}\label{def:moment}
		P(\Psi(\cdot,t))=-\int_\R\Re( i\partial_x\Psi(x,t)\overline{\Psi(x,t)})\left(1-\frac{1}{|\Psi(x,t)|^2}\right)dx.
	\end{equation}
	Because of the nonzero conditions at infinity \eqref{nonzero0}, the natural energy space to study \eqref{QGP} is (see \cite{delaire2023exotic})  
	\begin{equation}\label{def:boX}
		\mathcal{X}(\R)=\{u\in  H^1_{\rm loc}(\R;\C):u'\in  L^2(\R),1-|u|^2\in L^2(\R)\}.
	\end{equation} 
	We endow $\mathcal{X}(\R)$ with two metrics, the natural energy distance, which writes, for $\psi$, $\tilde{\psi}\in\mathcal{X}(\R)$
	\begin{equation}\label{def:metric}
		d_{\mathcal{X}}(\psi,\tilde{\psi})=||\partial_x\psi-\partial_x\tilde{\psi}||_{ L^2(\R)}+|||\psi|-|\tilde{\psi}|||_{ L^2(\R)}+|\psi(0)-\tilde{\psi}(0)|,
	\end{equation} 
	and the hydrodynamical distance defined for nonvanishing functions in $\mathcal{X}(\R)$ i.e.\ for $\psi=\sqrt{1-\eta}e^{i\theta}$ and $\tilde{\psi}=\sqrt{1-\tilde\eta}e^{i\tilde\theta}$ in $\mathcal{X}(\R)$ with $\inf_\R|\psi|>0$ and $\inf_\R|\tilde{\psi}|>0$, we define
	\begin{equation}\label{def:metric2}
		d_{\mathrm{hy}}(\psi,\tilde{\psi})=||\eta-\tilde{\eta}||_{ H^1(\R)}+||\partial_x\theta-\partial_x\tilde\theta||_{ L^2(\R)}+\left |\mathrm{Arg}\left(\frac{\psi(0)}{\tilde{\psi}(0)}\right)\right|,
	\end{equation}
	where $\mathrm{Arg}$ is a continuous determination of the argument of complex numbers defined near $1$.
	In a previous work with A.~de Laire~\cite{delaire2023exotic}, we were interested in the classification and stability properties of the so-called finite energy traveling-wave solutions to \eqref{QGP}. Namely, studying solutions to \eqref{QGP} of the form \eqref{Twans}
	which represents a traveling wave with profile $u_{c}=u_{c,\kappa}\in\mathcal{X}(\R)$ propagating at speed $c\in\R$. Hence, the profile $u_{c,\kappa}$ satisfies
	\begin{equation}
		\label{TWc}\tag{TW$(c,\kappa)$}
		-icu_{c,\kappa}'+u_{c,\kappa}''+u_{c,\kappa} (1-|u_{c,\kappa}|^2)-\kappa u_{c,\kappa}\left(|u_{c,\kappa}|^2\right)''=0.
	\end{equation}
	Taking the complex conjugate of equation~\eqref{TWc}, we can assume without loss of generality that $c\geq0$. We obtained a continuous branch of $\boC^{2}(\R)$-solutions for every $(c,\kappa)\in\mathcal{D}=[0,\sqrt{2})\times(-\infty,1/2)$. A (partial) version of our result writes
	\begin{theorem}[Theorem~1, Proposition~3.8 in \cite{delaire2023exotic}]
		\label{thm:classiftw}
		Let $\kappa\in(-\infty,1/2)$ and $c\geq0$.
		\begin{enumerate}
			\item If $c\geq\sqrt{2}$ and $u_{c,\kappa}\in\mathcal{X}(\R)\cap\boC^{2}(\R)$  satisfies \eqref{TWc}, then $u_{c,\kappa}$ is trivial, i.e.\ there exists $\varphi\in\R$ such that 
			$$u_{c,\kappa}(x)=e^{i\varphi},\quad\text{ for all } x\in\R.$$  
			\item If $0\leq c<\sqrt{2}$ so that $(c,\kappa)\in\mathcal{D}$, then there exists a nonconstant function $u_{c,\kappa}\in\mathcal{X}(\R)\cap\boC^{2}(\R)$ called dark soliton that satisfies \eqref{TWc}.
			Moreover, this solution is unique up to translation and phase change, i.e.\
			any other solution $\phi\in\mathcal{X}(\R)\cap\boC^{2}(\R)$ satisfies for some $(x_0,\varphi)\in\R\times\R$ $$\phi(x)=u_{c,\kappa}(x-x_0)e^{i\varphi},\quad\text{ for all }x\in\R.$$ 
			Additionally, the function $(c,\kappa,x)\mapsto u_{c,\kappa}(x)$ is smooth in $\mathcal{D}\times\R$ (where the derivative at $c=0$ is in the sense of Dini) and satisfies the following decay estimate: for every multi-index $\alpha=(\alpha_1,\alpha_2,\alpha_3)\in\N^3$ and $D^\alpha=\partial_c^{\alpha_1}\partial_\kappa^{\alpha_2}\partial_{x}^{\alpha_3}$, there exist $A,C>0$ continuously depending on $(c,\kappa)$ such that
			\begin{equation}\label{eq:decaydark}
				|D^\alpha\partial_xu_{c,\kappa}|+|D^\alpha(1-|u|^2)|\leq Ae^{-C|x|},\quad\text{ for all }x\in\R.
			\end{equation}
			Also $|u_{c,\kappa}|$ is even with $|u_{c,\kappa}|'>0$ in $(0,\infty)$. Finally we have the bounds $1>|u_{c,\kappa}(x)|\geq c/\sqrt{2}$ for all $x\in\R$ with equality at $x=0$, so that $\inf_\R|u_{c,\kappa}|>0$ if and only if $c>0$.
		\end{enumerate} 
	\end{theorem}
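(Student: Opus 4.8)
The plan is to reduce \eqref{TWc} to a single first-order autonomous ODE for the modulus and then analyse it by phase-plane and quadrature arguments. Writing $u_{c,\kappa}=\sqrt{\rho}\,e^{i\phi}$ with $\rho=|u_{c,\kappa}|^2$, I would substitute into \eqref{TWc} and separate real and imaginary parts. The imaginary part is a perfect derivative: it integrates to $\rho\,\phi'=\frac{c}{2}(\rho-1)$ once the constant is fixed by the conditions $\rho\to1$, $\phi'\to0$ at infinity, which come from \eqref{nonzero0} together with $u'\in L^2(\R)$. This expresses the phase gradient as $\phi'=\frac{c}{2}\bigl(1-1/\rho\bigr)$ in terms of $\rho$ alone, so that substituting back into the real part yields a second-order autonomous equation for $\rho$ only.

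The key step is to integrate this second-order equation. Writing it as $P(\rho)\rho''+Q(\rho)(\rho')^2+R(\rho)=0$ with $P=1-2\kappa\rho$ and $Q=-1/(2\rho)$, I would look for an integrating factor $\mu$ solving $\mu'/\mu=(2Q-P')/P$, which here collapses to $\mu=1/\rho$ and makes the equation exact. After fixing the constant at infinity I expect the clean first integral
\begin{equation}\label{eq:firstint}
(\rho')^2=\frac{(\rho-1)^2\,(2\rho-c^2)}{1-2\kappa\rho}.
\end{equation}
The double root at $\rho=1$ reflects the homoclinic nature of the profile and the simple root at $\rho=c^2/2$ its turning point. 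The two constraints defining $\mathcal{D}$ appear naturally: $\kappa<1/2$ keeps the denominator positive on the range $[c^2/2,1]$, while $c<\sqrt{2}$ guarantees $c^2/2<1$ so that this range is nondegenerate.

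Part (i) is then immediate: for $c\geq\sqrt{2}$ the factor $2\rho-c^2$ is negative for $\rho$ just below $1$, and there is no admissible turning point above $1$, so the right-hand side of \eqref{eq:firstint} cannot stay nonnegative unless $\rho\equiv1$; hence the only solution compatible with the boundary conditions is trivial. For part (ii) with $c<\sqrt{2}$, the right-hand side of \eqref{eq:firstint} is nonnegative precisely on $[c^2/2,1]$, and I would construct the dark soliton by separating variables,
\begin{equation}\label{eq:quad}
x-x_0=\pm\int\frac{\sqrt{1-2\kappa\rho}}{(1-\rho)\sqrt{2\rho-c^2}}\,d\rho.
\end{equation}
The simple zero at $\rho=c^2/2$ gives a smooth minimum (the center), while the double zero at $\rho=1$ makes the integral diverge logarithmically, producing a solution defined on all of $\R$ with $\rho\to1$ at $\pm\infty$. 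This yields evenness of $|u_{c,\kappa}|$ about the center, the monotonicity $|u_{c,\kappa}|'>0$ on $(0,\infty)$, the bounds $c^2/2\leq\rho<1$ (hence $1>|u_{c,\kappa}|\geq c/\sqrt{2}$, with $\inf_\R|u_{c,\kappa}|>0$ if and only if $c>0$), and, after reconstructing the phase from $\phi'=\frac{c}{2}(1-1/\rho)$, uniqueness up to the translation $x_0$ and an additive constant in $\phi$, i.e.\ up to translation and phase change.

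Finally, smoothness in $(c,\kappa)$ and the decay estimate \eqref{eq:decaydark} follow from \eqref{eq:firstint}--\eqref{eq:quad}: linearising near $\rho=1$ gives $1-\rho\sim e^{-C|x|}$ with rate $C=\sqrt{(2-c^2)/(1-2\kappa)}$, which bootstraps to all $x$-derivatives, while smooth dependence of the ODE solution on its parameters transfers to $u_{c,\kappa}$. I expect the main obstacle to lie here rather than in the existence. Establishing the full estimate \eqref{eq:decaydark} uniformly for every mixed multi-index $D^\alpha=\partial_c^{\alpha_1}\partial_\kappa^{\alpha_2}\partial_x^{\alpha_3}$ requires differentiating \eqref{eq:firstint} repeatedly and controlling the resulting linear ODEs with exponentially decaying coefficients, and the limit $c\to0$ is genuinely delicate: there $\rho$ touches $0$ at the center (the black soliton), the square root in \eqref{eq:quad} degenerates, and smoothness in $c$ survives only in the weaker Dini sense recorded in the statement.
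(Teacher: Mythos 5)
A preliminary remark: the present paper does not prove Theorem~\ref{thm:classiftw} at all --- it is imported from \cite{delaire2023exotic} (Theorem~1 and Proposition~3.8 there) --- so your proposal can only be compared with the method of that reference, which this paper echoes in hydrodynamic form. Your route is in substance the same one: reduce to the modulus $\rho=|u_{c,\kappa}|^2$, integrate the imaginary part of \eqref{TWc} to get $\rho\phi'=\tfrac{c}{2}(\rho-1)$, and obtain a first integral for $\rho$. Your first integral is correct: differentiating $(\rho')^2(1-2\kappa\rho)=(1-\rho)^2(2\rho-c^2)$ reproduces the profile equation, it reduces to the classical Gross--Pitaevskii relation at $\kappa=0$, and in the variable $\eta=1-\rho$ it is the integrated form of the paper's equation \eqref{eq:etac} (which, as printed, carries a sign error on the $c^2$ term). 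Your integrating factor $\mu=1/\rho$, the decay rate $\sqrt{(2-c^2)/(1-2\kappa)}$, and the identification of $\kappa<1/2$ and $c<\sqrt 2$ with positivity of the denominator and nondegeneracy of the range $[c^2/2,1]$ are all correct.

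The one genuine gap is at the very first step: the polar decomposition $u_{c,\kappa}=\sqrt{\rho}\,e^{i\phi}$ presupposes that $u_{c,\kappa}$ never vanishes. In part (i) nothing excludes zeros a priori, in part (ii) the lower bound $|u_{c,\kappa}|\geq c/\sqrt2$ is a \emph{conclusion} of the theorem, and at $c=0$ the black soliton genuinely vanishes at the origin, so the global phase does not exist; as written, your derivation is circular exactly where the theorem has content. The repair is standard and preserves all your formulas: multiplying \eqref{TWc} by $\bar{u}_{c,\kappa}$ and taking imaginary parts gives $\Im(\bar{u}_{c,\kappa}u_{c,\kappa}')=\tfrac{c}{2}(|u_{c,\kappa}|^2-1)$, multiplying by $\bar{u}_{c,\kappa}'$ and taking real parts gives $2|u_{c,\kappa}'|^2=(1-|u_{c,\kappa}|^2)^2+\kappa\big((|u_{c,\kappa}|^2)'\big)^2$ (the constants are fixed along a sequence $x_n\to\infty$, which is all that $u_{c,\kappa}'\in L^2(\R)$ provides), and combining these identities with $|u|^2|u'|^2=(\Re(\bar{u}u'))^2+(\Im(\bar{u}u'))^2$ yields your first integral pointwise with no nonvanishing hypothesis; evaluated at a hypothetical zero, where $\rho=\rho'=0$, it gives $0=-c^2$, so nonvanishing for $c>0$ is \emph{deduced} rather than assumed. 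Two further places remain sketches: in part (i) the phrase ``no admissible turning point above $1$'' does not by itself exclude excursions $\rho>1$ (for $c>\sqrt2$ one must note that values of $\rho$ slightly above $1$ make the right-hand side negative, and for $c=\sqrt2$ that an interior maximum would force $\rho_{\max}=1$); and the full estimate \eqref{eq:decaydark}, uniform in all mixed derivatives $D^\alpha$ including the Dini derivative in $c$ at $c=0$, is asserted rather than proved --- you rightly flag it as the hard part, but it remains a program rather than a proof.
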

	In the case $c=0$, the traveling-wave $u_{0,\kappa}$ vanishes at $x=0$ and $u_{0,\kappa}$ is also called black soliton or kink soliton \cite{chironexistence1d,bethuel2008existence}. Showing the stability of the black soliton requires other techniques \cite{bethuel-black,chiron-stability} since defining a momentum in that case requires to work modulo $\pi/2$ (see Appendix C in \cite{deLGrSm1}). Theorem~1 in \cite{delaire2023exotic} also show the existence of supersonic solution with $(c,\kappa)\in(\sqrt{2},\infty)\times(1/2,\infty)$, but in that case, the
	linearized Hamiltonian operator around the traveling wave has essential spectrum in $(-\infty,\lambda)$ for some $\lambda>0$, thus the analysis presented in this work, which is based on~\cite{linbubbles}, is not suited to study the stability of these waves. 
	In its weak formulation, equation \eqref{TWc} can be recast as 
	$$\begin{aligned}
		c\frac{d}{d s}P(u+sh)\rvert_{s=0}=\frac{d}{d s}E_\kappa(u+sh)\rvert_{s=0},
	\end{aligned}$$
	which is the Euler--Lagrange equation associated with the minimization problem
	\begin{equation}\label{Emin}
		\mathcal{E}_\kappa(\mathfrak{q})=\inf\{E_\kappa(u):u\in\mathcal{X}(\R),\inf_{x\in\R}|u(x)|>0,P(u)=\mathfrak{q}\}.
	\end{equation} 
	When $\kappa\leq0$ so that \eqref{Emin} defines nonnegative function, we showed stability and a ground state property for the dark soliton using a concentration-compactness argument, generalizing the approach used for the Gross--Pitaevski equation (equation \eqref{QGP} with $\kappa=0$) by F.~Bethuel P.~Gravejat and J.~C.~Saut in~\cite{bethuel2008existence} (see also \cite{delaire-mennuni}).
	\begin{theorem}[Theorem~1.10, 1.12 and Proposition~6.2 in \cite{delaire2023exotic}]\label{thm:min}
		Let $\kappa\in\R$ and $\mathcal{E}_\kappa$ be given by \eqref{Emin}, then the following statements hold.
		\begin{enumerate}
			\item If $\kappa>0$, then $\mathcal{E}_\kappa(\mathfrak{q})=-\infty,$ for all $\mathfrak{q}\in\R.$
			\item If $\kappa\leq0$, then there exist constants $\mathfrak{q}_\kappa^*>0$, $c_\kappa^*\in[0,\sqrt2)$  and a decreasing map $\mathfrak{c}:[0,\mathfrak{q}_\kappa^*)\to(c_\kappa^*,\sqrt{2}]$ satisfying $P(u_{\mathfrak{c}(\mathfrak{q}),\kappa})=\mathfrak{q}$. Then also, for every $\mathfrak{q}\in(-\mathfrak{q}_\kappa^*,\mathfrak{q}_\kappa^*)$ the infimum in \eqref{Emin} is attained, with $\mathcal{E}_\kappa(\mathfrak{q})=E_\kappa(u_{\mathfrak{c}(|\mathfrak{q}|),\kappa})$, the dark soliton $u_{\mathfrak{c}(|\mathfrak{q}|),\kappa}$ in the unique minimizer up to invariances and for every $|\mathfrak{q}|> \mathfrak{q}_\kappa^*$, we have $\mathcal{E}_\kappa(\mathfrak{q})=E_\kappa(u_{0,\kappa})$ and this infimum is not attained. Finally, for every $c\in(c_\kappa^*,\sqrt{2})$ the dark soliton $u_{c,\kappa}$ is orbitally stable for $d_\mathcal{X}$.
		\end{enumerate}
	\end{theorem}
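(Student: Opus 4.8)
The plan is to treat the two regimes by entirely different mechanisms: an explicit destabilizing test function for $\kappa>0$, and the concentration--compactness method (adapting the Gross--Pitaevskii analysis of Bethuel--Gravejat--Saut) for $\kappa\le0$. For statement (i) I would pass to the hydrodynamical variables $\eta=1-|u|^2$, $v=\partial_x\theta$, in which
\[
E_\kappa(u)=\int_\R\left(\frac{(\partial_x\eta)^2}{4(1-\eta)}+(1-\eta)v^2+\tfrac12\eta^2\right)dx-\frac{\kappa}{2}\int_\R(\partial_x\eta)^2\,dx,\qquad P(u)=-\int_\R\eta v\,dx.
\]
Fixing $\mathfrak{q}$ and starting from any admissible profile with $P=\mathfrak{q}$, on an interval where $v\equiv0$ I would graft a compactly supported bump that lifts the density $1-\eta$ to a large constant level $R$ and carries a fast oscillation of frequency $\omega$. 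Since $v\equiv0$ there, this leaves $P$ unchanged; on that interval the quantum--pressure term scales like $R\omega^2$ while the quasilinear term scales like $-\kappa R^2\omega^2$, so for $R$ large the latter dominates and letting $\omega\to\infty$ drives $E_\kappa\to-\infty$ at fixed momentum, giving $\mathcal{E}_\kappa(\mathfrak{q})=-\infty$. The only verification needed is that the potential and matching contributions remain $O(1)$ in $\omega$, which is routine.

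For statement (ii), with $\kappa\le0$ the quasilinear term is nonnegative, so $E_\kappa\ge0$ is coercive on $\mathcal{X}(\R)$. I would first record that $\mathfrak{q}\mapsto\mathcal{E}_\kappa(\mathfrak{q})$ is even, vanishes at $0$, and is nondecreasing and concave on $[0,\infty)$, hence subadditive. The decisive point—and the step I expect to be the main obstacle—is the \emph{strict} subadditivity $\mathcal{E}_\kappa(\mathfrak{q})<\mathcal{E}_\kappa(\mathfrak{q}_1)+\mathcal{E}_\kappa(\mathfrak{q}_2)$ for $\mathfrak{q}=\mathfrak{q}_1+\mathfrak{q}_2$ with $\mathfrak{q}_1,\mathfrak{q}_2\neq0$ and $\mathfrak{q}\in(0,\mathfrak{q}_\kappa^*)$, which I would deduce from strict concavity of $\mathcal{E}_\kappa$ on $(0,\mathfrak{q}_\kappa^*)$. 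Strict concavity in turn follows from the Hamilton relation $\mathcal{E}_\kappa'(\mathfrak{q})=\mathfrak{c}(\mathfrak{q})$ (a consequence of $\frac{d}{dc}E_\kappa(u_{c,\kappa})=c\,\frac{d}{dc}P(u_{c,\kappa})$) together with the strict monotonicity of $\mathfrak{c}$, i.e.\ the VK condition on the momentum map.

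Establishing that monotonicity requires a quantitative study of $c\mapsto p(c):=P(u_{c,\kappa})$ on $[0,\sqrt2)$ using the decay estimates \eqref{eq:decaydark}: one has $p(\sqrt2^-)=0$, and the turning point $c_\kappa^*$ is where $p'$ changes sign—absent (so $c_\kappa^*=0$) for weak quasilinear interaction and present (the cusp) for strong one. Setting $\mathfrak{q}_\kappa^*:=\sup_c p(c)$ and inverting $p$ on the decreasing branch $(c_\kappa^*,\sqrt2]$ produces the decreasing map $\mathfrak{c}$. For $|\mathfrak{q}|>\mathfrak{q}_\kappa^*$ the constraint cannot be realized on the soliton branch; a minimizing sequence then splits off the black soliton $u_{0,\kappa}$, which carries the maximal available momentum and energy, while the excess momentum disperses at negligible energy cost, yielding $\mathcal{E}_\kappa(\mathfrak{q})=E_\kappa(u_{0,\kappa})$ with the infimum not attained because $\inf_\R|u_{0,\kappa}|=0$ violates the admissibility constraint.

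With coercivity and strict subadditivity in hand I would run Lions' concentration--compactness dichotomy on a minimizing sequence for \eqref{Emin} at $\mathfrak{q}\in(0,\mathfrak{q}_\kappa^*)$: vanishing is impossible since it forces $P\to0\neq\mathfrak{q}$, dichotomy is excluded by strict subadditivity, so up to translation the sequence converges in $d_{\mathrm{hy}}$ to a minimizer. This minimizer solves \eqref{TWc} with $c$ the Lagrange multiplier, hence equals $u_{\mathfrak{c}(\mathfrak{q}),\kappa}$ up to invariances by the classification in Theorem~\ref{thm:classiftw} (the minimality selecting the stable branch $c>c_\kappa^*$), which also yields uniqueness of the minimizer. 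Finally, orbital stability for $c\in(c_\kappa^*,\sqrt2)$ is the standard corollary of compactness: conservation of $E_\kappa$ and $P$ along the flow \eqref{eq:hydrot} turns any trajectory starting near $u_{c,\kappa}$ into a time-uniform minimizing sequence, so it cannot escape a fixed $d_{\mathcal{X}}$-neighborhood of the orbit without contradicting the uniqueness-up-to-invariances of the minimizer; the passage between $d_{\mathrm{hy}}$ and $d_{\mathcal{X}}$ near the soliton is legitimate since $\inf_\R|u_{c,\kappa}|\ge c/\sqrt2>0$.
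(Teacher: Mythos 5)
Your overall architecture matches the proof this statement actually receives in \cite{delaire2023exotic} (the present paper only quotes the result and describes that proof as a concentration--compactness argument generalizing \cite{bethuel2008existence}, see also \cite{delaire-mennuni}): oscillatory test functions for $\kappa>0$, concentration--compactness for $\kappa\le0$. Part (i) of your sketch is sound: grafting a fast oscillation of the density on a region where $1-\eta=R>1/(2\kappa)$ makes the term $-\tfrac{\kappa}{2}\int(\eta_x)^2$ dominate $\int\frac{(\eta_x)^2}{4(1-\eta)}$, at fixed momentum, so $\mathcal{E}_\kappa\equiv-\infty$.

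For part (ii), however, your key step is circular. You propose to obtain strict subadditivity of $\mathcal{E}_\kappa$ from its strict concavity, and strict concavity from the Hamilton relation $\mathcal{E}_\kappa'(\mathfrak{q})=\mathfrak{c}(\mathfrak{q})$. That relation presupposes $\mathcal{E}_\kappa(\mathfrak{q})=E_\kappa(u_{\mathfrak{c}(\mathfrak{q}),\kappa})$, i.e.\ that the infimum in \eqref{Emin} is attained on the soliton branch --- which is precisely what the concentration--compactness argument (whose dichotomy exclusion needs the strict subadditivity) is supposed to prove. What the identity $\frac{d}{dc}E_\kappa(u_{c,\kappa})=c\,\frac{d}{dc}P(u_{c,\kappa})$ actually gives is strict concavity of the \emph{branch} curve $\widetilde{E}(\mathfrak{q}):=E_\kappa(u_{\mathfrak{c}(\mathfrak{q}),\kappa})$; since solitons are admissible test functions this yields only the upper bound $\mathcal{E}_\kappa\le\widetilde{E}$, and no lower bound on $\mathcal{E}_\kappa$, hence no subadditivity for the infimum function itself. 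The non-circular route, which is the one of \cite{bethuel2008existence,delaire-mennuni} and of the cited paper, is to establish monotonicity and concavity of $\mathcal{E}_\kappa$ \emph{directly}, by a priori constructions on arbitrary test functions, and to combine concavity with the strict bound $\mathcal{E}_\kappa(\mathfrak{q})<\sqrt{2}\,\mathfrak{q}$ for $\mathfrak{q}>0$ (obtained from soliton test functions) to rule out linearity near the origin and deduce strict subadditivity; only after compactness and identification of the minimizer via Theorem~\ref{thm:classiftw} does the Hamilton relation become available. A secondary inaccuracy: for $|\mathfrak{q}|>\mathfrak{q}_\kappa^*$, excess momentum does \emph{not} ``disperse at negligible energy cost'' --- spread-out momentum costs energy at a linear rate, by the same coercivity you invoke elsewhere; the correct mechanism is that phase winding concentrated where the density nearly vanishes carries momentum ($P=-\int\eta v$ with $\eta\approx1$) at energy cost $O(\inf_\R|u|^2)$, which is exactly why the flat part of $\mathcal{E}_\kappa$ starts where minimizing sequences develop near-zeros and why the infimum is then not attained.
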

	Theorem~\ref{thm:min} implies that dark solitons $u_{c,\kappa}$ with $0<c<c_\kappa^*$ are not global minimum of the energy at fixed momentum. Thus other methods must be employed to ensure stability. We display the energy-momentum of the dark soliton parametrized by $c$ in the left panel of Figure~\ref{fig1}--\ref{fig2}, it coincides with the function $\mathcal{E}_\kappa(\cdot)$ up to $\mathfrak{q}_\kappa^*$.
	By orbital stability for a distance $d$, we mean stability with respect to the orbit described by translations and rotations of the dark soliton $u_{c,\kappa}$.
	%To our knowledge, the results concerning the well-posedness of \eqref{QGP} are limited to local in time well-posedness (see Section~2), which motivates the following definition of orbital stability. 
	%To our knowledge, the results concerning the well-posedness of \eqref{QGP} are limited to local in time well-posedness (see Section~2), therefore to make sense of orbital instability, we place ourselves in the case where blowup does not occur in the sense that the solution to \eqref{QGP} starting at time $t=0$ close to the dark soliton exists for all $t\in\R$. 
	\begin{definition}\label{def:stabdef}\begin{enumerate}
			Let $s>2+1/2$ and $d$ be a distance on $\mathcal{X}(\R)$. The stability and instability for dark solitons is defined as follows.
			\item\label{item:stab} We say that $u_{c,\kappa}$ is orbitally stable for $d$ if for all $\varepsilon>0$, there exists $\delta>0$ with the following property:
			for every $\phi_0\in  H^{s}(\R)$ such that $d(u_{c,\kappa},u_{c,\kappa}+\phi_0)\leq\delta$, then, denoting $T=T_{\phi_0}$ the maximal time of existence of the solution $\Psi(x,t)=u_{c,\kappa}(x)+\phi(x,t)$ to \eqref{QGP} with $\phi\in\boC((-T,T); H^{s}(\R))$, we have
			\begin{equation}\label{def:stab}
				\sup_{t\in(-T,T)}\inf_{(x_0,\varphi)\in\R^2}d(u_{c,\kappa}(\cdot-x_0)e^{i\varphi},u_{c,\kappa}(\cdot)+\phi(\cdot,t))\leq\varepsilon.
			\end{equation}	
			\item\label{item:unstab}Assuming furthermore that the solution is global, we say that $u_{c,\kappa}$ is orbitally unstable for $d$ if there exists $\varepsilon_0>0$ such that for every $\delta>0$ we can find $\phi_0\in H^{s}(\R)$ so  that $d(u_{c,\kappa},u_{c,\kappa}+\phi_0)\leq\delta$ and the solution $\Psi(x,t)=u_{c,\kappa}(x)+\phi(x,t)$ to \eqref{QGP} with $\phi\in\boC(\R; H^{s}(\R))$ satisfies the following:
			%		there exists $C_{\varepsilon_0}(\delta)>0$ such that $C_{\varepsilon_0}(\delta)\to\infty$ as $\delta\to0$ and for all $t\in(-T_{\phi_0},T_{\phi_0})$, we have
			%		\begin{equation}\label{def:unstab}
				%			\inf_{(s,\varphi)\in\R^2}d(u_{c,\kappa}(\cdot-s)e^{i\varphi},u_{c,\kappa}(\cdot)+\phi(\cdot,t))\leq\varepsilon_0\implies |t|\leq C_{\varepsilon_0}(\delta).
				%		\end{equation}
			there exists a constant $t=t(\delta,\epsilon_0)$ such that 
			\begin{equation}
				\inf_{(x_0,\varphi)\in\R^2}d(u_{c,\kappa}(\cdot-x_0)e^{i\varphi},u_{c,\kappa}(\cdot)+\phi(\cdot,t))>\varepsilon_0.
			\end{equation}
		\end{enumerate}
	\end{definition}
	To our knowledge, the results concerning the well-posedness of \eqref{QGP} are limited to local well-posedness (see Section~2). Therefore to make sense of orbital instability, we place ourselves in the case where blowup does not occur in the sense that the solution $\Psi$ to \eqref{QGP} starting at time $t=0$ close to the dark soliton $u_{c,\kappa}$ in Definition~\ref{def:stabdef}~\eqref{item:unstab} exists for all $t\in\R$ in the space $u_{c,\kappa}+ H^{s}$.

	The main result presented is the rigorous justification of the VK stability criterion for equation \eqref{QGP}.
	\begin{theorem}\label{thm:critstab}
		Let $\kappa<1/2$ and $0 <c<\sqrt{2}$. Let $u_{c,\kappa}\in\mathcal{X}(\R)\cap C^2(\R)$ be the unique solution to \eqref{TWc} up to invariances given by Theorem~\ref{thm:classiftw}. Then $$c\mapsto P(u_{c,\kappa})\in\boC^1((0,\sqrt{2})),$$ and the traveling-wave solution $u_{c,\kappa}$ is orbitally  stable for $d_\mathcal{X}$ if \begin{equation}\label{eq:critstab}
			\frac{dP(u_{c,\kappa})}{dc}<0,
		\end{equation} whereas it is unstable for $d_\mathcal{X}$ if blowup does not occur in $u_{c,\kappa}+ H^{s}(\R)$ for some $s>2+1/2$ and
		\begin{equation}\label{eq:critunstab}
			\frac{dP(u_{c,\kappa})}{dc}>0.
		\end{equation}
	\end{theorem}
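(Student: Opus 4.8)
The plan is to recast the problem in the hydrodynamical variables $(\eta,v)$ of \eqref{eq:hydrot}, where the flow is Hamiltonian with skew-symmetric operator $J=\begin{pmatrix}0&\partial_x\\\partial_x&0\end{pmatrix}$, conserved energy $E_\kappa$, and momentum which in these variables reduces to $P=-\int_\R\eta v\,dx$, and then to place oneself within the Grillakis--Shatah--Strauss framework as adapted to such systems by Lin \cite{linbubbles} and Audiard \cite{audiard2017}. Writing $U_c=(\eta_c,v_c)$ for the profile of the dark soliton, equation \eqref{TWc} is equivalent to $U_c$ being a critical point of the action $E_\kappa-cP$, and integrating the density equation in the moving frame (using the nonzero conditions at infinity) yields the algebraic relation $v_c=-c\eta_c/(2(1-\eta_c))$. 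The $C^1$ (in fact smooth) dependence of $c\mapsto P(u_{c,\kappa})$ is obtained by differentiating $P(u_{c,\kappa})=-\int_\R\eta_c v_c\,dx$ under the integral sign, which is justified by the smoothness and the exponential decay estimate \eqref{eq:decaydark} of Theorem~\ref{thm:classiftw}. Stability is then governed by two objects: the Hessian $\boH_c:=(E_\kappa-cP)''(U_c)$ and the scalar function $\boV(c):=E_\kappa(u_{c,\kappa})-cP(u_{c,\kappa})$.

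The core of the argument is the spectral analysis of $\boH_c$. The second variation of $E_\kappa-cP$ is a quadratic form on perturbations $(\xi,w)$ whose pure $w$-block is the positive multiplication operator $2(1-\eta_c)>0$; one may therefore minimize the form in $w$ at fixed $\xi$, a Schur-complement reduction that preserves inertia and collapses $\boH_c$ to a scalar Sturm--Liouville operator $L_c=-\partial_x(a_c\,\partial_x\,\cdot)+q_c$ acting on the density perturbation $\xi$. Its principal coefficient is $a_c=\tfrac{1}{2(1-\eta_c)}-\kappa$, and since $1-\eta_c=|u_{c,\kappa}|^2\le 1$ and $\kappa<1/2$ one has $a_c\ge\tfrac12-\kappa>0$, so $L_c$ is uniformly elliptic; as $|x|\to\infty$ its coefficients tend to those of $-(\tfrac12-\kappa)\partial_{xx}+(1-\tfrac{c^2}{2})$, placing the bottom of the essential spectrum at $1-c^2/2>0$ because $c<\sqrt2$. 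Translation invariance gives $\boH_c\,\partial_xU_c=0$, and a direct check that $\partial_xv_c$ equals the optimal $w$ associated with $\partial_x\eta_c$ shows that the reduced zero mode is $\partial_x\eta_c$. By Theorem~\ref{thm:classiftw}, $|u_{c,\kappa}|$ is even with $|u_{c,\kappa}|'>0$ on $(0,\infty)$, so $\partial_x\eta_c$ is odd with a single zero at $x=0$; Sturm oscillation theory then identifies it as the first excited eigenfunction of $L_c$, whence $L_c$ has exactly one simple negative eigenvalue and a simple kernel. By the inertia-preserving reduction the same holds for $\boH_c$, i.e.\ $n(\boH_c)=1$ with $\ker\boH_c=\mathrm{span}\{\partial_xU_c\}$.

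With this spectral picture the conclusion follows from the abstract dichotomy. Since $U_c$ is a critical point of $E_\kappa-cP$, the envelope identity gives $\boV'(c)=-P(u_{c,\kappa})$ and hence $\boV''(c)=-\tfrac{d}{dc}P(u_{c,\kappa})$. The Grillakis--Shatah--Strauss/Lin criterion for a single negative direction then yields orbital stability when $\boV''(c)>0$, that is when $dP(u_{c,\kappa})/dc<0$, and orbital instability when $\boV''(c)<0$, that is when $dP(u_{c,\kappa})/dc>0$. For the stable regime I would construct a Lyapunov functional from the action $E_\kappa-cP$, coercive on the subspace transverse to the negative and zero directions using the positivity of $L_c$ there together with the conservation of $E_\kappa$ and $P$; for the unstable regime I would follow the construction of Lin and Audiard of an escaping direction combined with a modulation and monotonicity argument, under the standing no-blowup hypothesis that the solution remains in $u_{c,\kappa}+H^s$.

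The main obstacle is the quasilinear term. It enters the Hessian through the nonconstant principal coefficient $a_c=\tfrac{1}{2(1-\eta_c)}-\kappa$, so, unlike in the semilinear case, the reduced operator $L_c$ has a $\kappa$-dependent leading part; keeping $a_c$ bounded below (which is what forces the restriction $\kappa<1/2$) and controlling the domain and self-adjointness of $\boH_c$ and $L_c$ in the energy space is the delicate analytic point. Moreover, the quasilinear flow is only \emph{locally} well posed, and in high regularity $H^s$ with $s>2+1/2$ rather than in the energy space $\mathcal{X}(\R)$ where the metrics $d_{\mathcal{X}}$ and $d_{\mathrm{hy}}$ live; reconciling the Grillakis--Shatah--Strauss machinery with this weaker well-posedness, transferring coercivity between the two metrics, and making the instability mechanism rigorous without a global flow, is where the bulk of the technical work lies and where the arguments of Lin and Audiard must be carefully adapted.
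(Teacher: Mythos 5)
Your proposal is correct and follows essentially the same route as the paper: passing to hydrodynamical variables, verifying the Grillakis--Shatah--Strauss assumptions \`a la Lin, reducing the Hessian by completing the square in $\delta v$ (your Schur-complement step, with the same principal coefficient $\tfrac{1}{2(1-\eta_c)}-\kappa=\tfrac{1-2\kappa+2\kappa\eta_c}{2(1-\eta_c)}$ and essential spectrum bottom $1-c^2/2$), using Sturm oscillation theory on the zero mode $\partial_x\eta_c$ to get exactly one simple negative eigenvalue, concluding stability/instability from $d''(c)=-\tfrac{d}{dc}P(u_{c,\kappa})$ with Lin's energy-decreasing curve handling the unstable case (where the abstract GSS instability theorem is unavailable), and finally transferring between $d_{\mathrm{hy}}$ and $d_{\mathcal{X}}$ via the local equivalence of the two metrics. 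Even your finer checks (the relation $v_c=-c\eta_c/(2(1-\eta_c))$, and that the optimal $w$ for $\partial_x\eta_c$ is $\partial_x v_c$) match the paper's computations, and your off-diagonal $J$ is in fact the consistent choice (the paper's displayed diagonal $J$ is a slip, as its own claim that $JB$ extends $\tau'(0)$ requires the antidiagonal form).
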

	As explained in Section~\ref{sec:hamilton}, conditions \eqref{eq:critstab} and \eqref{eq:critunstab} amount to $d''(c)>0$ and $d''(c)<0$ respectively,
	where $d:c\mapsto E_\kappa(u_{c,\kappa})-cP(u_{c,\kappa})$ is the scalar function characterizing the stability in~\cite{grillakisshatah}.
	In Section~5 in \cite{delaire2023exotic}, we were able to compute explicitly the function $(c,\kappa)\mapsto P(u_{c,\kappa})$ and analyze its variations. Hence, we can restate the stability result in Theorem~\ref{thm:critstab} in terms of the propagation speed $c\in(0,\sqrt2)$ of the dark soliton follows.
	\begin{corollary}\label{cor:stab}
		There exists $\kappa_0<0$ with $\kappa_0\approx-3.636$ such that the two following statements hold
		\begin{enumerate}
			\item If $\kappa \in[\kappa_0,1/2)$, then $u_{c,\kappa}$ is orbitally stable for every $c\in(0,\sqrt{2})$.
			\item If $\kappa<\kappa_0$, then there exists $\tilde{c}_\kappa\in(0,\sqrt 2)$ such that $u_{c,\kappa}$ is orbitally unstable if $c\in(0,\tilde{c}_\kappa)$, whereas it is orbitally stable if $c\in(\tilde{c}_\kappa,\sqrt{2})$.
		\end{enumerate}
	\end{corollary}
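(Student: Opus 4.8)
The plan is to derive Corollary~\ref{cor:stab} directly from the VK criterion of Theorem~\ref{thm:critstab}, so that the whole statement reduces to a sign analysis of the single function $g(c,\kappa):=\partial_c P(u_{c,\kappa})$ on $(0,\sqrt{2})\times(-\infty,1/2)$. Indeed, by Theorem~\ref{thm:critstab} the soliton $u_{c,\kappa}$ is stable for $d_{\mathcal{X}}$ exactly when $g(c,\kappa)<0$, see \eqref{eq:critstab}, and unstable (under the no-blowup hypothesis) when $g(c,\kappa)>0$, see \eqref{eq:critunstab}. Smoothness of $(c,\kappa)\mapsto P(u_{c,\kappa})$ on $\mathcal{D}$, together with the exponential decay \eqref{eq:decaydark} that justifies differentiating under the integral in \eqref{def:moment}, guarantees that $g$ is well defined and smooth, so the corollary is equivalent to locating the zero set $\{g=0\}$ in the $(c,\kappa)$-plane.

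First I would insert the explicit formula for $P(u_{c,\kappa})$ established in Section~5 of \cite{delaire2023exotic}, obtained by integrating the first integral of the profile equation \eqref{TWc}, and differentiate it in $c$ to get a closed expression for $g$. Working at fixed $\kappa$, I would then analyze the two endpoints. Near $c=\sqrt{2}^-$ the profile degenerates to the constant background, and a small-amplitude expansion should give $g(c,\kappa)<0$ for every $\kappa<1/2$, so fast waves are stable regardless of $\kappa$. Near $c=0^+$ the profile approaches the black soliton and the sign of the one-sided (Dini) limit $G(\kappa):=\lim_{c\to0^+}g(c,\kappa)$ depends on $\kappa$; this limit, computed from the explicit momentum formula, is the quantity that governs the whole dichotomy.

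The structural heart of the proof is to show that, for each fixed $\kappa$, the map $c\mapsto g(c,\kappa)$ changes sign \emph{at most once} on $(0,\sqrt{2})$, and only from positive to negative. I would try to establish this through a factorization $g(c,\kappa)=A(c,\kappa)\,B(c,\kappa)$ with $A>0$ and $B$ monotone (or strictly convex with a single interior minimum) in $c$, reducing the sign of $g$ to that of $B$. Combined with $g(\sqrt{2}^-,\kappa)<0$, this single-crossing property yields the full dichotomy: if $G(\kappa)<0$ then $g<0$ throughout and $u_{c,\kappa}$ is stable for all $c$, whereas if $G(\kappa)>0$ there is a unique zero $\tilde c_\kappa\in(0,\sqrt{2})$ with $g>0$ on $(0,\tilde c_\kappa)$ and $g<0$ on $(\tilde c_\kappa,\sqrt{2})$, giving instability of slow waves and stability of fast waves, in agreement with the cusp in the energy--momentum diagram.

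Finally, the threshold $\kappa_0$ is characterized by $G(\kappa_0)=0$. A monotonicity check on the explicit function $G$ shows this root is unique, and a numerical evaluation pins it at $\kappa_0\approx-3.636$; then $G(\kappa)<0$ for $\kappa\in(\kappa_0,1/2)$ while $G(\kappa)>0$ for $\kappa<\kappa_0$, with the borderline $\kappa=\kappa_0$ giving $g<0$ on the open interval $(0,\sqrt{2})$ and hence stability for all such $c$, which closes the two cases. The step I expect to be the main obstacle is this single-crossing property: proving rigorously that $g(\cdot,\kappa)$ has at most one zero requires uniform control of a transcendental expression over the whole interval $(0,\sqrt{2})$, and since a clean factorization with monotone $B$ may not be available, one likely has to combine rigorous convexity and sign estimates for an auxiliary function with carefully bounded numerical input, both to certify the structure of $\{g=0\}$ and to validate the value $\kappa_0\approx-3.636$.
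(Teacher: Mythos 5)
Your proposal is correct and follows essentially the same route as the paper: Corollary~\ref{cor:stab} is obtained by combining the VK criterion of Theorem~\ref{thm:critstab} with the sign dichotomy for $\partial_c P(u_{c,\kappa})$ on $(0,\sqrt{2})$. The single-crossing property you flag as the main obstacle is not re-proved in the paper either --- it is imported by citation as Lemma~\ref{lem:varp} (Proposition~5.6 and Lemma~5.7 of \cite{delaire2023exotic}), where the explicit momentum formula you mention was already analyzed, including the threshold value $\kappa_0\approx-3.636$.
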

	When $\kappa_0\leq\kappa<1/2$ we recover that the whole branch is stable as in the semilinear case $\kappa=0$ (see Theorem~4 in \cite{bethuel2008existence}). In the left and center panels of Figure~\ref{fig1} and Figure~\ref{fig2} respectively, we displayed the momentum of the dark soliton as a function of $c$ which yielded the stability conjecture proven in Corollary~\ref{cor:stab}. Figure~\ref{fig2} also depicts the value of $\title{c}_\kappa$ which satisfies $\partial_cP(u_{\tilde{c}_\kappa,\kappa})=0$ and that we approach using Newton's method.
	\begin{figure}[ht!]
		\centering
		\begin{tabular}{cc}
			\resizebox{0.35\textwidth}{!}{
				\begin{overpic}
					[scale=0.4,trim=0 0 0 0,clip]{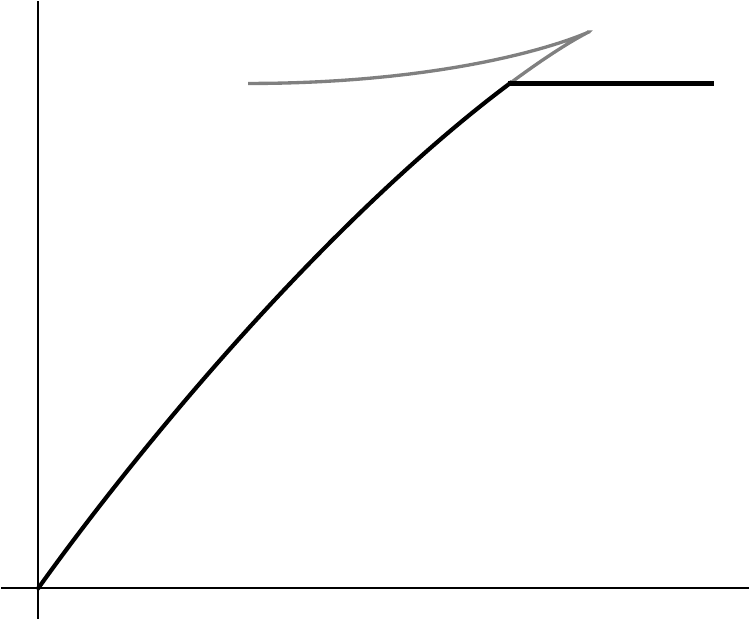}
					\put(25,68){\vector(3,1){7}}
					\put(6,65){{$c\to0$}}
					\put(20,10){\vector(-3,-1){10}}
					\put(22,10){{$c\to\sqrt{2}$}}
					\put(75,68){\vector(-3,1){7}}
					\put(78,66){{$c_\kappa^*$}}
					\dottedline{2}(68,70.5)(68,5.2)
					\put(65,-0.5){$\mathfrak{q}_\kappa^*$}
					\dottedline{2}(5,71.5)(70,71.5)
					\put(-12,70){3.748}
					\put(87,79){\vector(-1,0){7}}
					\put(90,78){$\tilde{c}_\kappa=0.473$}
					%\put(-18,86){$E_\ka(u_{0,\ka})$}
					\put(92,8){$P(u_{c,\kappa})$}
					\put(6,80){$E_\kappa(u_{c,\kappa})$}
					\put(36,-1){$\frac{\pi}{2}$}
					\put(38.5,5){\line(0,1){2}}
				\end{overpic} 
			}
			&\qquad\qquad\resizebox{0.4\textwidth}{!}{
				\begin{overpic}
					[scale=0.4,trim=0 0 0 0,clip]{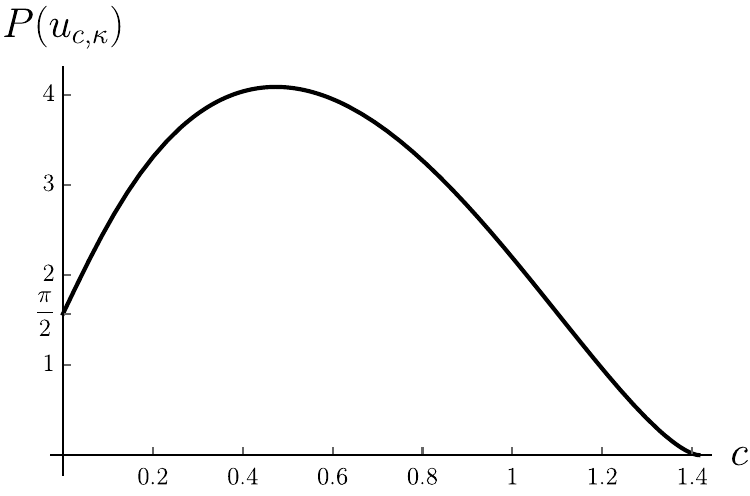}
				\end{overpic}
			}
		\end{tabular}
		\caption{Left panel depicts in grey the energy-momentum diagram of dark solitons $u_{c,\kappa}$ with $\kappa=-50$ it also displays in black the curve $\mathcal{E}_\kappa(\cdot)$. Right panel displays the momentum of the dark soliton $u_{c,\kappa}$ with $\kappa=-50$ as a function of the speed $c$.}
		\label{fig1}
	\end{figure}
	\begin{figure}[ht!]
		\centering
		\begin{tabular}{ccc}
			\resizebox{0.25\textwidth}{!}{
				\begin{overpic}
					[scale=0.4,trim=0 0 0 0,clip]{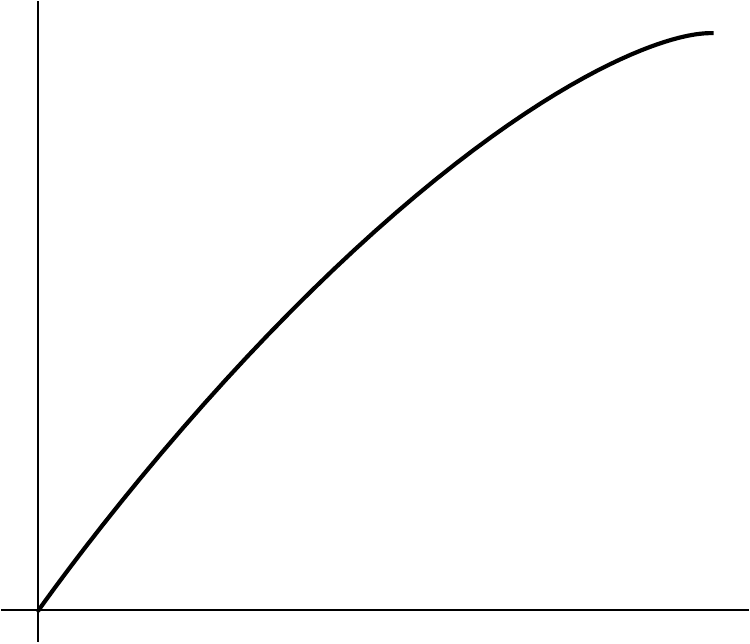}
					\put(-14,80){1.347}
					\put(4,82){\line(1,0){3}}
					\dottedline{2}(4,82)(94.5,82)
					\dottedline{2}(94.5,0)(94.5,82)      
					\put(84,74){\vector(3,2){9}}
					\put(74,68){{$c_\kappa^*=0$}}
					\put(18,9){\vector(-3,-1){10}}
					\put(20,9){{$c\to\sqrt{2}$}}
					\put(96,8){$P(u_{c,\kappa})$}
					\put(0,87){$E_\kappa(u_{c,\kappa})$}
					\put(93,-2){$\frac{\pi}{2}$}
					\put(94.5,4){\line(0,1){2}}
					\put(99.8,4.45){\line(1,0){10}}
				\end{overpic}
			}
			&\quad
			\resizebox{0.35\textwidth}{!}{
				\begin{overpic}
					[scale=0.6,trim=0 0 0 0,clip]{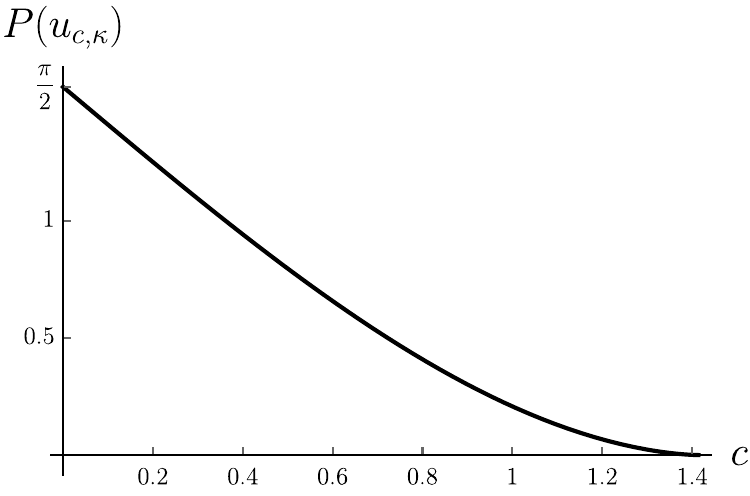}
				\end{overpic}
			}&\resizebox{0.35\textwidth}{!}{
				\begin{overpic}
					[scale=0.4,trim=20 0 0 0,clip]{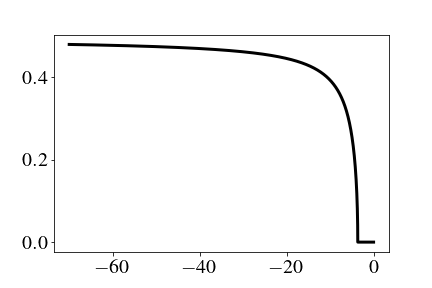}
					\put(0,60){$\tilde{c}_\kappa$}
					\put(85,15){$\kappa$}
				\end{overpic}
			}
		\end{tabular}
		\caption{Left panel depicts the energy-momentum diagram of the dark soliton $u_{c,\kappa}$ with $\kappa=-3$. The center panel represents the momentum of the dark soliton $u_{c,\kappa}$ with $\kappa=-3$ as a function of the speed $c$. The right panel displays the value of $\Tilde{c}_\kappa$ depending on $\kappa$, we put $\tilde{c}_\kappa=0$ for $\kappa\geq\kappa_0$.}
		\label{fig2}
	\end{figure}
	
	From now on, we fix $\kappa<1/2$ and remove this variable from the subscripts when no confusion can arise. Writing $\Psi=\sqrt{1-\eta}e^{i\theta}$ with $|\Psi|>0$ in $\R$, we can rewrite the energy and momentum in terms of $\eta$ and $v=\partial_x\theta$ and define other useful conservation laws for the hydrodynamic system \eqref{eq:hydrot}
	\begin{align}
		E_\kappa(\sqrt{1-\eta}e^{i\theta})=	E((\eta,v))&=\int_\R\frac{(\eta_x)^2}{4(1-\eta)}+v^2(1-\eta)+\frac{1}{2}(\eta^2-\kappa(\eta_x)^2),\quad\text{with }v=\partial_x\theta\quad(\mathbf{energy}),\label{Ehydro}\\
		P(\sqrt{1-\eta}e^{i\theta})=P((\eta,v))&=-\int_\R\eta v,\quad\text{with }v=\partial_x\theta\quad(\mathbf{momentum}),\label{phydro}\\	
		N((\eta,v))&=-\int_\R\eta,\quad(\mathbf{number~of~particles}),\label{pnumb}\\
		\Theta((\eta,v))&=\int_\R v,\quad(\mathbf{twisting~angle})\label{twist}.
	\end{align}
	We will check in Section~\ref{sec:hamilton} that in the space $X= H^1(\R)\times L^2(\R)$, we can recast \eqref{eq:hydrot} as the Hamiltonian system 
	\begin{equation}\label{eq:hamilton}
		\partial_t(\eta,v)=JE'(\eta,v),\text{ where }J=\begin{pmatrix}
			\partial_x&0\\
			0&\partial_x
		\end{pmatrix}.
	\end{equation}
	{Here $J:D(J)\to X$ is defined on a dense domain $D(J)\subset X^*= H^{{-1}}(\R)\times L^2(\R)$ and $J$ is not onto as explained in Remark~\ref{rem:onto}.}
	When $c>0,$ since the traveling wave $u_{c}(x-ct)=\sqrt{1-\eta_c(x-ct)}e^{i\theta_c(x-ct)}$ is smooth and rapidly decaying (in $(x,t)$), all the computations \eqref{eq:hydrot} and \eqref{Ehydro}--\eqref{twist} become rigorous and integrating in space \eqref{eq:hydrot}, we get for all $x\in\R$
	\begin{equation}\label{eq:twhydro}
		\left\{\begin{aligned}
			&-c\eta_c(x)=2v_c(x)(1-\eta_c(x)),\\
			&-cv_c(x)=-\left(\frac{\partial_{xx}\eta_c(x)}{2(1-\eta_c(x))}+\frac{(\partial_x\eta_c(x))^2}{4(1-\eta_c(x))^2}+v_c^2-\eta_c(x)-\kappa\partial_{xx}\eta_{c}(x)\right),
		\end{aligned}\right.
	\end{equation}
	where $v_c=\partial_x\theta_c$. System~\eqref{eq:twhydro} in its weak form can also be written as 
	\begin{equation}\label{eq:boundstatx}
		cP'((\eta_c,v_c))=E'((\eta_c,v_c)),\quad\text{ in } H^{{-1}}(\R)\times L^2(\R),
	\end{equation} 
	so that solving \eqref{eq:twhydro} amounts to find critical points of the Hamiltonian
	$E-cP.$
	
	The outline of this work follows that of \cite{linbubbles}.
	In Section~\ref{sec:hamilton} we check that \eqref{eq:hydrot} is a Hamiltonian system fitting the framework of~\cite{grillakisshatah} so that stability criterion \eqref{eq:critstab} holds for the hydrodynamical distance $d_{\mathrm{hy}}$ using their result. We deal with the instability result in Section~\ref{sec:instab}. Since $J$ is not onto, the instability result of~\cite{grillakisshatah} does not apply, we instead follow the construction of the energy-decreasing direction developped in~\cite{linbubbles}. In Section~\ref{sec:equiv}, we deduce the stability and instability for the energy distance $d_{\mathcal{X}}$ in Theorem~\ref{thm:critstab} and we show Corollary~\ref{cor:stab}. In Appendix~\ref{appendix}, we discuss some extensions of Theorem~\ref{thm:critstab}.
	\section{Hamiltonian system framework and orbital stability}\label{sec:hamilton}
	Let $X= H^1(\R;\R)\times  L^2(\R;\R)$ with the natural scalar product $\langle(\eta_1,v_1),(\eta_2,v_2)\rangle_X=\langle\eta_1,\eta_2\rangle_{ H^1(\R)}+\langle v_1,v_2\rangle_{ L^2(\R)}$. We also introduce the dual space $X^*= H^{{-1}}(\R;\R)\times  L^2(\R;\R)$ with duality product $$\langle\Lambda((\eta_1,v_1),(\eta_2,v_2)\rangle=\langle\eta_1,v_1),(\eta_2,v_2)\rangle_X,\quad\text{ for all }(\eta_1,v_1),(\eta_2,v_2)\in X,$$
	where $\Lambda=(1-\partial_{xx},1)$ is an isometric isomorphism between $X$ and $X^*$, but we do not identify $X$ to $X^*$. Regarding \eqref{Ehydro}, the space $X$ should be interpreted as the hydrodynamical counterpart of the energy space $\mathcal{X}(\R)$. 
	We have the variational triple $X\subset L^2(\R;\R)^2\subset X^*$ and we denote by $(\cdot,\cdot)_{ L^2(\R)}$ the natural scalar product on $ L^2(\R)$. For notational convenience, we shall use these products for $(\eta,v)\in X$ or solely in terms of $\eta\in H^1(\R)$ or $v\in L^2(\R)$, assuming the other coordinate to be zero. %(or on $ L^2(\R)^2$ for notational convenience).
	In this setting, straightforward computations implies that $E$ and $P$ are $ C^2$-functional on $X$ with, in $X^*$,
	\begin{equation}\label{eq:Eprim}
		E'(\eta,v)=\left(-\frac{\partial_{x}\eta_x}{2(1-\eta)}-\frac{\eta_x^2}{4(1-\eta)^2}-v^2+\eta+\kappa\partial_{x}\eta_x,\quad2v(1-\eta)\right),
	\end{equation}
	so that \eqref{eq:hydrot} can be recast as the Hamiltonian system \eqref{eq:hamilton}.
	For every $(\delta\eta,\delta v)\in  H^1(\R)\times L^2(\R)$, we get in $X^*$
	\begin{align}\label{eq:Ehess}
		E''(\eta,v)(\delta\eta,\delta v)=2\Big(&	-\partial_x\left(\frac{1-2\kappa+2\kappa\eta}{4(1-\eta)}(\delta\eta)_x\right)-
		\partial_x\left(\frac{\eta_x}{4(1-\eta)^2}\right)\delta\eta+\frac{\eta_x^2}{4(1-\eta)^3}\delta\eta-v\delta v,\\
			&(1-\eta)\delta v-v\delta\eta\Big),
	\end{align}
	\begin{equation}\label{eq:pprim}
		P'(\eta,v)=-(v,\eta),\quad\text{ and }\quad P''(\eta,v)(\delta\eta,\delta v)=-(\delta v,\delta\eta).
	\end{equation}
	Let $\tau$ denote the unitary group of translation on $X$ with, for $x_0\in\R$
	\begin{equation}\label{def:tau}
		\tau(x_0)(\eta,v)=(\eta,v)(x-x_0),
	\end{equation}
	\begin{flalign}
		\text{\raggedleft so that,}\hspace{2cm}\tau'(0)=\begin{pmatrix}
			-\partial_x&0\\
			0&-\partial_x
		\end{pmatrix},\quad\text{ and }\quad\tau'(x_0)=\tau(x_0)\tau'(0)=\tau'(0)\tau(x_0),&&
	\end{flalign} 
	are closed operators in $X$ with domain $D(\tau'(\cdot))=D(\tau'(0))=H^2(\R)\times H^1(\R)$.
	Also, letting \begin{equation}\label{eq:B}
		B=\begin{pmatrix}
			0&-1\\
			-1&0
		\end{pmatrix}\in\mathcal{L}(X;X^*),
	\end{equation}
	we get $P((\eta,v))=\langle B(\eta,v),(\eta, v)\rangle/2$ and $JB$ is an extension of $\tau'(0)$. We call bound state a solution to \eqref{eq:hydrot} of the form 
	\begin{equation}\label{def:boundstatt}
		(\eta,v)(x,t)=\tau(ct)(\eta_c(x),v_c(x)),\quad\text{ for all }(x,t)\in\R\times\R,
	\end{equation}
	such that $(\eta_c,v_c)\in X$ are critical points of $E-cP$ i.e.\
	\begin{equation*}
		cP'((\eta_c,v_c))=E'((\eta_c,v_c)).	
	\end{equation*} 
	It will also be of importance to perform spectral analysis on the Hessian 
	\begin{equation}\label{def:hess}
		H_c=E''((\eta_c,v_c))-cP''((\eta_c,v_c)),
	\end{equation} 
	in this regard, a demonstration of every abstract result we use can be found either in this textbook by C.~Cheverry and N.~Raymond~\cite{spectraltheorybook} or in Appendix~B of the book by J.Angulo Pava~\cite{angulospectral}.
	Let us recall the three assumptions in \cite{grillakisshatah} that ensure their stability theorem.

	\textit{Assumption~1}{~(Existence of solutions).} For each $(\eta_0,v_0)\in  H^{s}(\R)\times H^{{s-1}}(\R)$ with $s>2+1/2$, there exist $T>0$ and a solution $(\eta,v)\in\boC((-T,T); H^{s}(\R)\times H^{{s-1}}(\R))$ to \eqref{eq:hydrot}  such that $(\eta,v)(\cdot,0)=(\eta_0,v_0)$ and the energy and momentum are conserved in time
	\begin{equation}\label{eq:cons}
		E_\kappa((\eta,v)(\cdot,t))=E_\kappa((\eta_0,v_0)),\quad\text{ and} \quad P((\eta,v)(\cdot,t))=P((\eta_0,v_0)).
	\end{equation} 
	If $(\eta_c,v_c)$ is a bound state in $H^s(\R)\times H^{s-1}(\R)$, it suffices to have the above for $(\eta_0,v_0)$ in a neighborhood of $(\eta_c,v_c)$ in $ ( H^{s}(\R)\times H^{{s-1}}(\R),||\cdot||_X).$
	
	\textit{Assumption~2}{~(Existence of bound states).} There exist constants $c_1<c_2$ and a $\boC^1((c_1,c_2);X)$ map $c\mapsto(\eta_c,v_c)$ with the following properties. 
	\begin{enumerate}
		\item $E'(\eta_c,v_c)=cP'(\eta_c,v_c)$.
		\item $(\eta_c,v_c)\in D(\tau'(0)^3)\cap D(J\Lambda \tau'(0)^2)$, where $\Lambda=(1-\partial_{xx},1)$ is the isomorphism between $X$ and $X^*$.
		\item $\tau'(0)(\eta_c,v_c)\ne0$.
	\end{enumerate}

	\textit{Assumption~3}{~(Spectral decomposition of $H_c$).} For every $c\in(c_1,c_2)$ and $H_c$ given by \eqref{def:hess}, we assume
	\begin{enumerate}
		\item There exists $\chi\in X$ such that 
		\begin{equation}
			\langle H_c\chi,\chi\rangle<0.
		\end{equation}
		\item There exists a closed subspace $P_c\subset X$, and $C>0$ such that
		\begin{equation}
			\langle H_cp,p\rangle\geq C||p||^2,\quad\text{ for all }p\in P_c.
		\end{equation}
		\item For every $u\in X$, there exist unique $(a,b)\in\R^2$ and $p\in P_c$ such that
		\begin{equation}\label{eq:decompo}
			u=a\chi+b\tau'(0)(\eta_c,v_c)+p.
		\end{equation}
	\end{enumerate}
	\begin{remark}
		To our knowledge, when $\kappa\ne0$, there are no result for the IVP associated with \eqref{eq:hydrot} with initial data in $X$, thus
		\textit{Assumption~1} agrees with our definition of orbital stability with respect to $ H^{s}(\R)$-perturbations~\eqref{def:stab}.
		Also, it is worth mentioning that \textit{Assumption~3} corresponds \textit{Assumption~3B} in \cite{grillakisshatah} i.e. the decomposition \eqref{eq:decompo} is not necessarily the orthogonal decomposition; this enables them to handle Hamiltonian systems in Banach spaces.
	\end{remark}
	Recall that \eqref{eq:hydrot} can be rewritten as an Euler--Korteweg system for which the local well-posedness was investigated in \cite{benzoniLWP}. Applying the result therein to \eqref{QGP} for $\kappa<1/2$ (see \cite{delaire2023exotic} for details), we deduce the following, from which \textit{Assumption~1} follows in a neighborhood of $(\eta_c,v_c)$ by Lemma~\ref{lem:dequiv}.
	\begin{theorem}[Theorem~5.1 in \cite{benzoniLWP}]\label{thm:lwpEK}
		Let $s>1/2+2$, $\kappa<1/2$ and $c>0$ and \begin{equation*}
			J_\kappa=\begin{cases}
				(0,\infty)\text{ if }\kappa\leq0,\\
				(0,1/\sqrt{2\kappa}),\text{ if }0<\kappa<1/2.
			\end{cases}
		\end{equation*}
		If $\Psi_0\in u_{c,\kappa}+ H^{s}(\R)$ satisfies $|\Psi_0(x)|\in J_\kappa$ for every $x\in\R$,
		then there exists $T_{\Psi_0}>0$, the maximal time of existence such that for every $T\in (0,T_{\Psi_0})$, \eqref{QGP} has a unique solution $\Psi$ on  $\R\times[0,T]$, satisfying $\Psi(\cdot,0)=\Psi_0$ and 
		\begin{equation*}\label{lwp}
			\left\{ \begin{aligned}
				&\Psi\in\boC([0,T];u_{c,\kappa}+ H^{s}(\R)),\\
				&|\Psi|(\R\times[0,T])\subset\subset J_\kappa.
			\end{aligned}\right.
		\end{equation*}
		Moreover, the flow map is continuous in a neighborhood of $\Psi_0$ in $u_{c,\kappa}+ H^{s}(\R)$, and the energy and momentum are conserved, i.e.\ \eqref{eq:cons} holds for all $t\in[0,T].$
		Also writing $\Psi=\sqrt{1-\eta}e^{i\theta}$ we deduce that $\eta$ and $v=\partial_{x}\theta$ satisfy \eqref{eq:hydrot} with $(\eta,v)\in\boC([0,T]; H^{s}(\R)\times H^{{s-1}}(\R))$.
	\end{theorem}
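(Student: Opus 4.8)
Since the statement is quoted verbatim as Theorem~5.1 of \cite{benzoniLWP}, the plan is not to reprove the quasilinear energy estimates but to explain how that general Euler--Korteweg well-posedness result specializes to \eqref{QGP}. First I would record the hydrodynamical reformulation of the introduction: writing $\rho=1-\eta=|\Psi|^2$ and taking the velocity to be $2v=2\partial_x\theta$, the first line of \eqref{eq:hydrot} becomes the continuity equation $\partial_t\rho+\partial_x(2v\rho)=0$, while the second line, after expressing the dispersive terms through $\rho$ (using $\eta_x=-\rho_x$, $\eta_{xx}=-\rho_{xx}$), takes the Euler--Korteweg form
$$\partial_t\rho+\partial_x(2v\rho)=0,\qquad \partial_t v+\partial_x\Pi(\rho,v)=\partial_x\!\left(K(\rho)\,\partial_{xx}\rho+\tfrac12 K'(\rho)(\partial_x\rho)^2\right),$$
with a smooth pressure-type flux $\Pi$ and capillarity coefficient $K(\rho)=\tfrac{1}{2\rho}-\kappa=\tfrac{1-2\kappa\rho}{2\rho}$; indeed the coefficient of $\partial_{xx}\rho$ in the dispersive part of \eqref{eq:hydrot} is precisely $\tfrac{1}{2\rho}-\kappa$, and the $(\partial_x\rho)^2$ term matches $\tfrac12 K'(\rho)$, exactly as in the $\kappa=0$ quantum-pressure case. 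As recalled in \cite{delaire2023exotic}, this is a particular Euler--Korteweg system up to the linear rescaling of the velocity unknown.

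The heart of the matter is then to verify the structural hypotheses of \cite[Theorem~5.1]{benzoniLWP}. The decisive one is the ellipticity/positivity of the capillarity coefficient, $K(\rho)>0$: since $\rho>0$ this is equivalent to $1-2\kappa\rho>0$, i.e.\ $|\Psi|^2<1/(2\kappa)$ when $\kappa>0$, and to no constraint at all when $\kappa\le0$. Combined with the no-vacuum requirement $\rho>0$, i.e.\ $|\Psi|>0$, this is exactly the condition $|\Psi_0|\in J_\kappa$ built into the statement. The assumption $\kappa<1/2$ guarantees that $J_\kappa$ is a nonempty neighborhood of $1$, so that the background of modulus one and the dark soliton belong to it: by Theorem~\ref{thm:classiftw} one has $c/\sqrt2\le|u_{c,\kappa}|<1$, so $\inf_\R|u_{c,\kappa}|\ge c/\sqrt2>0$ precisely because $c>0$, and the range of $|u_{c,\kappa}|$ sits compactly inside $J_\kappa$. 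The smoothness of the coefficients on the admissible range of $\rho$ and the regularity threshold $s>d/2+2=5/2$ in dimension $d=1$ match the hypotheses of \cite{benzoniLWP} verbatim, and the conservation of energy and momentum is part of their conclusion.

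Finally I would transfer the conclusion from $(\eta,v)$ back to $\Psi$. Given $\Psi_0\in u_{c,\kappa}+H^s(\R)$ with $|\Psi_0|\in J_\kappa$, the absence of vanishing lets me set $\eta_0=1-|\Psi_0|^2\in H^s(\R)$ and lift a continuous phase $\theta_0$ with $v_0=\partial_x\theta_0\in H^{s-1}(\R)$, using that $\Psi_0$ and $u_{c,\kappa}$ share the nonzero condition at infinity; applying \cite[Theorem~5.1]{benzoniLWP} produces $(\eta,v)\in\boC([0,T];H^s(\R)\times H^{s-1}(\R))$ with $|\Psi|=\sqrt{1-\eta}$ remaining compactly inside $J_\kappa$. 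Reconstructing the phase $\theta$ by integrating in time the equation it inherits from $\Psi=\sqrt{1-\eta}e^{i\theta}$ and \eqref{eq:hydrot}, and setting $\Psi=\sqrt{1-\eta}e^{i\theta}$, yields a solution of \eqref{QGP} in $u_{c,\kappa}+H^s(\R)$, with continuity of the flow map inherited from that of the Euler--Korteweg flow.

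The point requiring the most care -- and the reason the full equivalence is deferred to \cite{delaire2023exotic} -- is exactly this passage between \eqref{QGP} and \eqref{eq:hydrot} at the level of the spaces $u_{c,\kappa}+H^s$: one must check that the Madelung transform and its inverse are well defined and continuous there (hinging on $\inf_\R|\Psi|>0$, guaranteed by $|\Psi|\in J_\kappa$ together with compactness of its range), that the lifted phase is unambiguous given the behavior at infinity, and that the open condition $|\Psi|(\R\times[0,T])\subset\subset J_\kappa$ persists on the time interval so that $K(\rho)$ stays bounded away from $0$. The genuinely quasilinear analysis -- symmetrizing the dispersive system and closing the energy estimates -- lives inside \cite{benzoniLWP} and is invoked here as a black box.
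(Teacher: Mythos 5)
Your proposal takes essentially the same approach as the paper: the paper does not reprove this statement but imports it as Theorem~5.1 of \cite{benzoniLWP}, deferring the specialization to \eqref{QGP} to \cite{delaire2023exotic}, and that specialization is exactly what you carry out (the hydrodynamic/Madelung change of variables, the identification of the capillarity coefficient $K(\rho)=\tfrac{1}{2\rho}-\kappa$ whose positivity is precisely the condition $|\Psi|\in J_\kappa$, the role of $\kappa<1/2$ and $c>0$ in placing the range of $|u_{c,\kappa}|$ compactly inside $J_\kappa$, the threshold $s>2+1/2$, and the passage between $\Psi$ and $(\eta,v)$). Your computations of $K$ and of the constraint defining $J_\kappa$ are correct, so the proposal is sound and consistent with the paper's treatment.
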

	For \textit{Assumption~2}, take $c\in(0,\sqrt2)$, and let $u_{c}$ be the dark soliton given by Theorem~\ref{thm:classiftw} so that $\inf_{x\in\R}|u_c|>0$. Then we can write $u_c=\sqrt{1-\eta_c}e^{i\theta_c}$ with $(\eta_c,\partial_x\theta_c)$ satisfying \eqref{eq:twhydro} pointwisely and \textit{Assumption~2}~(1) follows. By the smoothness of $u_c(x)$ with respect to $c$ and the decay estimate \eqref{eq:decaydark} and using the identity 
	\begin{equation}\label{eq:polarder}
		u_c'=\left(\frac{\eta_c'}{2\sqrt{1-\eta}}+i\sqrt{1-\eta}\theta_c'\right)e^{i\theta},
	\end{equation} it is easy to check that $c\mapsto(\eta_c,\partial_x\theta_c)$ belongs in $\boC^\infty((0,\sqrt2)),X)$. Once again appealing to the smoothness and decay of $u_c$, we recover $(\eta_c,\partial_x\theta_c)\in  H^\infty(\R)^2$ which implies \textit{Assumption~2}~(2). 
	Finally, since $u_c$ is nonconstant, \textit{Assumption~2}~(3) follows.

	To check \textit{Assumption~3}, it remains to compute $\langle H_c(\delta\eta,\delta v),(\delta\eta,\delta v)\rangle$. Using \eqref{eq:Ehess}--\eqref{eq:pprim}, we get
	\begin{align}\label{eq:Hfull}
		\langle H_c(\delta\eta,\delta v),(\delta\eta,\delta\notag v)\rangle=&\langle-\partial_x\left(\frac{1-2\kappa+2\kappa\eta}{2(1-\eta)}\partial_{x}\right)\delta\eta,\delta\eta\rangle
		-\langle\partial_{x}\left(\frac{\eta_x}{2(1-\eta)^2}\right)\delta\eta,\delta\eta\rangle\\
		&+\int_\R \frac{\eta_x^2}{2(1-\eta)^3}\delta\eta^2+
		\delta\eta^2
		+\int_\R-4v\delta v\delta\eta+2(1-\eta)\delta v^2+2c\delta v\delta\eta,\notag\\
		=&\langle-\partial_x\left(\frac{1-2\kappa+2\kappa\eta}{2(1-\eta)}\partial_{x}\right)\delta\eta,\delta\eta\rangle
		-\langle\partial_{x}\left(\frac{\eta_x}{2(1-\eta)^2}\right)\delta\eta,\delta\eta\rangle
		+\int_\R \frac{\eta_x^2}{2(1-\eta)^3}\delta\eta^2\notag\\&+
		\delta\eta^2
		-\int_\R\frac{(c-2v)^2}{2(1-\eta)}\delta\eta^2+\int_\R2(1-\eta_c)\left(\delta v+\frac{c-2v}{2(1-\eta)}\delta\eta\right)^2.
	\end{align} 
	Using the first line in \eqref{eq:twhydro} and taking $(\eta,v)=(\eta_c,v_c)$ in \eqref{eq:Hfull}, we can rewrite the last line in \eqref{eq:Hfull} to obtain
	\begin{equation}\label{eq:H}
		\langle H_c(\delta\eta,\delta v),(\delta\eta,\delta v)\rangle=\langle L_c\delta\eta,\delta\eta\rangle+\int_\R2(1-\eta_c)\left(\delta v+\frac{c}{2(1-\eta_c)^2}\delta\eta\right)^2,
	\end{equation}
	where $L_c$ is an self-adjoint unbounded operator on $ L^2(\R;\R)$ with domain $ H^2(\R;\R)$ given by
	\begin{equation}\label{eq:L}
		L_c=-\partial_x\left(\frac{1-2\kappa+2\kappa\eta_c}{2(1-\eta_c)}\partial_{x}\right)-\frac{\partial_{xx}\eta_c}{2(1-\eta_c)^2}-\frac{(\partial_x\eta_c)^2}{2(1-\eta)^3}+1-\frac{c^2}{2(1-\eta_c)^3}.
	\end{equation}
	Since $L_c$ is semi-bounded from below, integrating by part the first term in $\langle L_c\delta\eta,\delta\eta\rangle$, we infer that $L_c$ is associated with a quadratic form on $ H^1(\R;\R)$ by the Lax--Milgram theorem (Theorem~2.89 in \cite{spectraltheorybook}).
	
	Since $0<\eta_c\leq1-c^2/2$ and $0<c<\sqrt{2}$, we have  $1-\eta_c>0$ and $1-2\kappa+2\kappa\eta_c>0$ in $\R$. Then, using ideas similar to the ones in section~6.3.2  of \cite{spectraltheorybook} and Lemma~6.13 in \cite{spectraltheorybook}, we have in $ L^2(\R;\R)$
	\begin{equation}\label{eq:spectrlaplac}
		\sigma_{\mathrm{ess}}\left(-\partial_x\left(\frac{1-2\kappa+2\kappa\eta_c}{2(1-\eta_c)}\partial_x\right)\right)=[0,\infty).
	\end{equation}
	We infer that the other terms in $L_c-\lambda$ are compact operators for some $\lambda>0$. Indeed, notice that 
	\begin{equation}\label{lim:compact}
		\lim_{|x|\to \infty}-\frac{\partial_{xx}\eta_c(x)}{2(1-\eta_c(x))^2}-\frac{(\partial_x\eta_c(x))^2}{2(1-\eta_c(x))^3}+1-\frac{c^2}{2(1-\eta_c(x))^3}=1-\frac{c^2}{2}>0.
	\end{equation}
	\begin{lemma}\label{lem:Tcompact}
		Let $0<c<\sqrt{2}$ and fix $\lambda=1-c^2/2>0$. Let also $T_c\in\mathcal{L}( H^2(\R;\R), L^2(\R;\R))$ be given by
		\begin{equation}
			T_c=-\frac{\partial_{xx}\eta_c}{2(1-\eta_c)^2}-\frac{(\partial_x\eta_c)^2}{2(1-\eta_c)^3}+1-\frac{c^2}{2(1-\eta_c)^3}
		\end{equation}
		then $T_c-\lambda$ is compact i.e. for every sequence  $(u_n)\subset H^2(\R)$ with $u_n\rightharpoonup0$ we can extract a subsequence such that $(T_c-\lambda)(u_{n_k})\to0$ in $ L^2(\R)$. 
	\end{lemma}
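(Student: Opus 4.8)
The plan is to recognize $T_c-\lambda$ as the operator of multiplication by the scalar function
\[
g_c(x):=-\frac{\partial_{xx}\eta_c}{2(1-\eta_c)^2}-\frac{(\partial_x\eta_c)^2}{2(1-\eta_c)^3}-\frac{c^2}{2(1-\eta_c)^3}+\frac{c^2}{2},
\]
(where I have used $\lambda=1-c^2/2$) and then to invoke the standard principle that multiplication by a continuous function vanishing at infinity is a compact operator from $H^2(\R)$ to $L^2(\R)$. First I would verify that $g_c\in\boC(\R)$ is bounded and tends to $0$ at infinity. Continuity and boundedness follow from the smoothness of $\eta_c$ together with the bound $1-\eta_c=|u_c|^2\geq c^2/2>0$ of Theorem~\ref{thm:classiftw}, which keeps the denominators uniformly away from $0$; the decay to $0$ is exactly the content of the limit \eqref{lim:compact}. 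In fact \eqref{eq:decaydark} gives exponential decay of $\eta_c$, $\partial_x\eta_c$ and $\partial_{xx}\eta_c$, so each summand of $g_c$ decays exponentially, whence $\sup_{|x|>R}|g_c(x)|\to0$ as $R\to\infty$.

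Next, fix a sequence $(u_n)\subset H^2(\R)$ with $u_n\rightharpoonup0$; in particular $(u_n)$ is bounded in $H^2(\R)$, say $\|u_n\|_{H^2(\R)}\leq M$. By the compactness of the embedding $H^2((-R,R))\hookrightarrow L^2((-R,R))$ (Rellich--Kondrachov) together with a diagonal extraction over $R\in\N$, I extract a subsequence, still denoted $(u_n)$, that converges in $L^2_{\mathrm{loc}}(\R)$; since $u_n\rightharpoonup0$ in $H^2(\R)$ and hence weakly in $L^2_{\mathrm{loc}}(\R)$, the strong local limit must coincide with the weak one, so $u_n\to0$ in $L^2((-R,R))$ for every $R>0$.

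Then I would estimate, for any $R>0$,
\begin{align*}
\|(T_c-\lambda)u_n\|_{L^2(\R)}^2
&=\int_{|x|\leq R}|g_c u_n|^2+\int_{|x|>R}|g_c u_n|^2\\
&\leq\|g_c\|_{L^\infty(\R)}^2\,\|u_n\|_{L^2((-R,R))}^2+\Big(\sup_{|x|>R}|g_c(x)|^2\Big)M^2.
\end{align*}
Given $\varepsilon>0$, since $g_c$ vanishes at infinity I first choose $R$ so large that the second term is $<\varepsilon/2$ for all $n$; with $R$ now fixed, the first term tends to $0$ as $n\to\infty$ by the local $L^2$ convergence, hence is $<\varepsilon/2$ for $n$ large. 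Therefore $(T_c-\lambda)u_n\to0$ in $L^2(\R)$, which is the asserted compactness.

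The argument is essentially standard and presents no serious obstacle; the only point requiring genuine care is the first step, namely checking rigorously that $g_c$ is a bounded continuous function vanishing at infinity. This is where the strict positivity $1-\eta_c\geq c^2/2$, preventing the denominators from degenerating, together with the exponential decay estimate \eqref{eq:decaydark} for $\eta_c$ and its derivatives, are essential, and it is precisely what \eqref{lim:compact} records.
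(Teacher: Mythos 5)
Your proof is correct and takes essentially the same route as the paper's: both split $\|(T_c-\lambda)u_{n}\|_{L^2(\R)}^2$ into a bounded region $(-R,R)$, handled by the compact embedding of $H^2$ into $L^2_{\mathrm{loc}}$ (Rellich), and an exterior region controlled by the vanishing of the coefficients at infinity recorded in \eqref{lim:compact}. Your explicit framing of $T_c-\lambda$ as multiplication by a bounded continuous function vanishing at infinity, and your diagonal extraction over $R\in\N$ to obtain a single subsequence (a point the paper leaves implicit by fixing $R=R_\varepsilon$ before extracting), merely streamline the same argument.
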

	\begin{proof}
		Let $\varepsilon>0$ and $R=R_\varepsilon>0$.  by   Kato--Rellich theorem (Theorem~4.20 in \cite{spectraltheorybook}) there exists $v\in H^1((-R,R))$ and a subsequence $u_{n_k}\to v$ in $H^1((-R,R))$. Of course $v=0$ by the uniqueness of the weak limit.
		Then denoting $I_R=\R\backslash(-R,R)$  we split  $||(T_c-\lambda)(u_{n_k})||_{ L^2(\R)}^2$ as 
		\begin{equation}\label{eq:LtwoT}
			||(T_c-\lambda)(u_{n_k})||_{ L^2(\R)}^2=||(T_c-\lambda)(u_{n_k})||_{ L^2(I_R)}^2+||(T_c-\lambda)(u_{n_k})||_{ L^2((-R,R))}^2.
		\end{equation}
		We deduce, using \eqref{lim:compact} that taking $R>0$ big enough yield $$||T_c-\lambda||^2_{\mathcal{L}( H^2(I_R); L^2(I_R))}\leq\varepsilon/(2\sup_{k\in\N}||u_{n_k}||^2_{ H^2(\R)}),$$
		letting us bound the first term in \eqref{eq:LtwoT} by $\varepsilon/2$.
		Once $R$ is fixed, we take $k$ big enough so that considering $T_c-\lambda$ as an operator in $\mathcal{L}( H^2((-R,R)); L^2((-R,R)))$, we have 
		$||(T_c-\lambda)(u_{n_k})||^2_{L^2(-R,R)}\leq\varepsilon/2$. This shows that $(T_c-\lambda)(u_{n_k})\to0$ in $ L^2(\R)$ ending the proof.
	\end{proof}
	Lemma~\ref{lem:Tcompact}, enables us to compute $\sigma_{\mathrm{ess}}(L_c)=\sigma_{\mathrm{ess}}(\partial_x((1-2\kappa+2\kappa\eta_c)/(2-2\eta_c)\partial_x)+\lambda+T_c-\lambda)$ which yields, using a generalized version of the Weyl essential spectrum theorem (see Theorem~B.48 in \cite{angulospectral})
	\begin{equation}\label{eq:spectrL}
		\sigma_{\mathrm{ess}}(L_c)=[\lambda,\infty)=[1-c^2/2,\infty).
	\end{equation}
	The operator $L_c$ being self-adjoint, the rest of its spectrum $\sigma_{\mathrm{fred}}(L_c)=\sigma(L_c)\backslash\sigma_{\mathrm{ess}}(L_c)$ lies in $(-\infty,1-c^2/2)$ and every $\mu\in\sigma_{\mathrm{fred}}(L_c)$ is an isolated eigenvalue of finite multiplicity (see Section~6.3 in \cite{spectraltheorybook}). 
	Using the first line in \eqref{eq:twhydro} we can rewrite second equation in \eqref{eq:twhydro} only in terms of $\eta_c$
	\begin{equation}\label{eq:etac}
		-\left(\frac{1-2\kappa+2\kappa\eta_c}{2(1-\eta_c)}\partial_{x}\partial_x\eta_c\right)-\frac{(\partial_x\eta_c)^2}{4(1-\eta)^2}+\eta_c-c^2\frac{\eta^2-2\eta}{4(1-\eta)^2},\quad\text{ in }\R.
	\end{equation}
	Differentiating \eqref{eq:etac} with respect to $x$ yields
	$\partial_x\eta_c\in \ker(L_c)$. Since $|u_c|'>0$ in $(0,\infty)$ and $|u_c|$ is even, we deduce that $\partial_x\eta_c=\partial_x(1-|u_c|^2)$ vanishes once at $x=0$, so that, by the Sturm-Liouville oscillation theorem (see Appendix~B.5 in \cite{angulospectral}), there exists one and only one negative eigenvalue $\mu_-<0$ of multiplicity one with eigenvector $\chi_1\in H^2(\R)$.
	The eigenvalues $\mu_-$ and $0$ being simple and isolated, by the min-max principle (see Section~6.4 in \cite{spectraltheorybook}), we infer that there exists $\mu_1>0$ such that for every $z\in H^1(\R)$ satisfying
	\begin{equation}\label{eq:ortheta}
		(z,\chi_1)_{ L^2(\R)}=(z,\partial_x\eta_c)_{ L^2(\R)}=0,
	\end{equation} we have
	$\langle L_cz,z\rangle\geq\mu_1||z||_{ L^2(\R)}^2. $
	It is also clear that $L_c$ satisfies some kind of G$\mathring{\mathrm{a}}$rding inequality so that proceeding along the same lines as in Lemma~4.1 in \cite{linbubbles} yields the existence of $\mu>0$ such that $\langle L_cz,z\rangle\geq\mu||z||_{ H^1(\R)}^2.$
	Setting \begin{equation}\label{def:chi-}
		\chi_-=\left(\chi_1,-\frac{c}{2(1-\eta)^2}\chi_1\right),
	\end{equation}
	and using \eqref{eq:H}, we get $\langle H_c\chi_-,\chi_-\rangle=\langle L_c\chi_1,\chi_1\rangle<0$. Then, letting $$P_c=\{(p_1,p_2)\in X:(p_1,\chi_1)_{ L^2(\R)}=(p_1,\partial_x\eta_c)_{ L^2(\R)}=0\},$$
	and considering the $ L^2(\R)$-orthogonal decomposition of $\eta=a\chi_1+b\partial_x\eta+p_1$,
	we can perform the following decomposition of $(\eta,v)\in X$: $(\eta,v)=a\chi_-+b\partial_x(\eta_c,v_c)+(p_1,p_2),$  %$\mathrm{span}\{\chi_1\}\oplus\mathrm{span}\{\partial_x\eta_c\}\oplus\{\chi_1,\partial_x\eta_c\}^\perp$,
	with $p_2=v-ac\chi_1/(2-2\eta)-b\partial_xv_c$. The uniqueness of the decomposition of $(\eta,v)$ follows from the uniqueness of the orthogonal decomposition of $\eta$. Using Lemma~4.2 in \cite{linbubbles} (see also \cite{guosemilin}) we deduce that there exists $C>0$ satisfying $\langle H_cp,p\rangle>C||p||^2_X$ for all $p\in P_c$ so that \textit{Assumption~3} follows.

	Let $d(c)=E((\eta_c,v_c))-cP((\eta_c,v_c))$. Since $c\mapsto(\eta_c,v_c)$ is in $\boC^1((0,\sqrt{2}))$, we can compute
	\begin{equation}\label{eq:critgss}
		d''(c)=\left(\langle E'((\eta_c,v_c))-cP'((\eta_c,v_c)),\partial_c(\eta_c,v_c)\rangle-P((\eta_c,v_c))\right)'=-\frac{d}{dc}P((\eta_c,v_c)).
	\end{equation}
	Assuming \eqref{eq:critstab}, we get $d''(c)>0$ and we can apply Theorem~3 in \cite{grillakisshatah}. The only remaining thing to check is that for any sequence $(\eta^n,v^n)=(\eta,v)_n\subset X$ with $E((\eta,v)_n)\to E((\eta_c,v_c))$ and $P((\eta,v)_n)\to P((\eta_c,v_c)),$ we can construct $(u_n)\subset X$ such that, up to extraction, we have $||u_n-(\eta,v)_n||_X\to0$ and $P(u_n)=P((\eta_c,v_c))$. By the first line in \eqref{eq:twhydro}, we have $P((\eta_c,v_c))>0$ for all $c\in(0,\sqrt{2})$, thus, up to a subsequence extraction, we assume   $P((\eta,v)_n)>0$ for all $n\in\N$. Hence, $\alpha_n=P((\eta_c,v_c))/P((\eta,v)_n)$ is well defined with $\alpha_n\to1$ so that $u_n=(\eta_n,\alpha_nv_n)$ satisfies $P(u_n)=P((\eta_c,v_c))$ and we can check $||u_n-(\eta_n,v_n)||_X\to0$. For $\psi=\sqrt{1-\eta}e^{i\theta}$ with $\inf_\R|\psi|>0$, notice that we can always choose $\varphi\in\R$ so that $\mathrm{Arg}(\psi(0)/u_c(0))-\varphi=0$ thus 
	$$\inf_{\varphi\in\R}d_{\mathrm{hy}}(\psi,u_ce^{i\varphi})=||(\eta,v)-(\eta_c,v_c)||_X,$$ 
	so that the stability of $u_c$ for $d_{\mathrm{hy}}$ follows from the orbital stability of the bound state $(\eta_c,v_c)$ in $X$.
	\begin{theorem}[Theorem~3 in \cite{grillakisshatah}]\label{thm:stabgss}
		If Assumptions~1--3 hold, then the following statement is satisfied.
		If $d''(c)>0$ then there exists $\varepsilon>0$ such that $E(\cdot)$ is minimized at $\mathfrak u=(\eta_c,v_c)$ in the space 
		$$\{u\in X : \inf_{x_0\in\R}||T(x_0)u-\mathfrak u||_{X}<\varepsilon\text{ and }P(u)=P(\mathfrak u)\}.$$
		Moreover the dark soliton $u_c$ is orbitally stable for $d_{\mathrm{hy}}$.
	\end{theorem}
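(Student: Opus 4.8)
The plan is to obtain the statement as an application of the abstract stability theorem of Grillakis, Shatah and Strauss, so the bulk of the work consists in assembling the verifications already carried out in this section. First I would record that \textit{Assumptions~1--3} hold: \textit{Assumption~1} is supplied by the local well-posedness of Theorem~\ref{thm:lwpEK} together with the equivalence of distances in Lemma~\ref{lem:dequiv}, which—via conservation of $E$ and $P$—is compatible with Definition~\ref{def:stabdef}; \textit{Assumption~2} follows from the smoothness in $c$ and the exponential decay \eqref{eq:decaydark} of the dark soliton furnished by Theorem~\ref{thm:classiftw}; and \textit{Assumption~3} is a consequence of the spectral picture of $L_c$, namely one simple negative eigenvalue with eigenfunction $\chi_1$, a kernel spanned by $\partial_x\eta_c$, and the coercivity $\langle H_c p,p\rangle\geq C\|p\|_X^2$ on the codimension-two subspace $P_c$, together with the splitting $X=\R\chi_-\oplus\R\,\tau'(0)(\eta_c,v_c)\oplus P_c$.

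Next I would feed the VK hypothesis into the scalar criterion. Since $c\mapsto(\eta_c,v_c)$ is $\boC^1$, the identity \eqref{eq:critgss} gives $d''(c)=-\tfrac{d}{dc}P((\eta_c,v_c))$, so that \eqref{eq:critstab} is exactly $d''(c)>0$. With this sign the abstract theorem of \cite{grillakisshatah} applies and yields the claimed minimization property of $E$ on the constrained tube around $(\eta_c,v_c)$.

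The step that needs genuine care is upgrading this variational characterization to dynamical orbital stability, because the abstract theorem requires restoring the exact value of the momentum along any nearby sequence while remaining close in $X$, and this is where the failure of $J$ to be onto must not interfere. Here I would use that $P((\eta_c,v_c))>0$ (from the first line of \eqref{eq:twhydro}): given $(\eta_n,v_n)$ with $E$ and $P$ converging to the bound-state values, one may assume $P((\eta_n,v_n))>0$ and rescale the velocity by $\alpha_n=P((\eta_c,v_c))/P((\eta_n,v_n))\to1$, producing $u_n=(\eta_n,\alpha_n v_n)$ with exact momentum and $\|u_n-(\eta_n,v_n)\|_X\to0$, which is precisely the hypothesis demanded by \cite{grillakisshatah}. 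Only the spectral decomposition of $H_c$, and not the surjectivity of $J$, enters this stability direction.

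Finally I would transfer stability from the hydrodynamical variables back to $\psi$. For $\psi=\sqrt{1-\eta}e^{i\theta}$ with $\inf_\R|\psi|>0$, choosing $\varphi\in\R$ so that $\mathrm{Arg}(\psi(0)/u_c(0))-\varphi=0$ eliminates the argument term in $d_{\mathrm{hy}}$, giving $\inf_{\varphi\in\R}d_{\mathrm{hy}}(\psi,u_ce^{i\varphi})=\|(\eta,v)-(\eta_c,v_c)\|_X$; hence the orbital stability of the bound state $(\eta_c,v_c)$ in $X$ translates verbatim into orbital stability of $u_c$ for $d_{\mathrm{hy}}$. The genuinely delicate points are the momentum-correction step and the observation that $J$-non-surjectivity is irrelevant in the stable regime; the hard analytic input, namely the spectral analysis of $L_c$ and the coercivity of $H_c$ on $P_c$, has already been front-loaded before the statement.
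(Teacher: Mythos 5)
Your proposal is correct and follows essentially the same route as the paper: verifying \textit{Assumptions~1--3} exactly as done in Section~\ref{sec:hamilton} (well-posedness plus Lemma~\ref{lem:dequiv}, regularity/decay of the soliton, and the spectral analysis of $L_c$ yielding the splitting with $\chi_-$ and $P_c$), invoking the identity \eqref{eq:critgss} to translate \eqref{eq:critstab} into $d''(c)>0$, restoring the momentum constraint by the same velocity rescaling $\alpha_n=P((\eta_c,v_c))/P((\eta,v)_n)$, and transferring stability to $d_{\mathrm{hy}}$ by the same choice of phase $\varphi$. You also correctly identify that the non-surjectivity of $J$ (Remark~\ref{rem:onto}) only obstructs the converse/instability direction, not this application.
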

	\begin{remark}\label{rem:onto}
		Theorem~3 in \cite{grillakisshatah} is stated as an equivalence between the variation of $d(\cdot)$ and the stability. However, we cannot apply the converse statement since the skew-symmetric operator  $J$ in \eqref{eq:hamilton} is not onto. For instance $(\eta_c,v_c)\notin \mathrm{Range}(J)$ because  $\eta_c>0$ in $\R$ and hence $C+\int^x_{-\infty}\eta_c\notin  H^{{-1}}(\R)$ for every $C\in\R$.
	\end{remark}
	\section{Instability}\label{sec:instab}
	Regarding the instability, arguments similar to the ones used by Lin in~\cite{linbubbles} yield the result. We explain how to modify the first steps that consist of creating an energy-decreasing direction in $X$ while referring to \cite{linbubbles} for detailed demonstrations. From now on, we assume that $c\in(0,\sqrt{2})$ is such that the dark soliton $(\eta_c,v_c)\in X$ satisfies the VK inequality \eqref{eq:critunstab}. We also fix $s>2+1/2$ to be the regularity of the perturbations in Definition~\ref{def:stab}~\eqref{item:unstab}.   
	Firstly, we have $\chi_1\in  H^\infty(\R)$ by a bootstrap argument since it satisfies $\chi_1\in H^2(\R)$ and $L_c(\chi_1)=\mu_-\chi_1$, with $L_c$  given by \eqref{eq:L}. In fact, as in \cite{linbubbles}, we can assume that $\chi_-\in X_s$ with
	\begin{equation}\label{def:Xs}
		X_s=( H^{s}(\R)\times H^{{s}}(\R))\cap( L^1(\R,(1+|x|)dx)\times L^1(\R,(1+|x|)dx)),
	\end{equation} where $\chi_-$ is the negative direction of $H_c$ given by \eqref{def:chi-}. We also get $(\eta_c,v_c)\in X_s$ using the smoothness and rapid decay of the dark solitons established in Theorem~\ref{thm:classiftw}.  Using the implicit function theorem on $(q,l)\mapsto(\eta_q,v_q)+l\chi_-$, we can find a $\boC^1$-map, $l(q)$ and a curve $\Psi(q)$ defined in the neighborhood of $c$ with \begin{equation}\label{eq:minunstab}
		\Psi(q)=(\eta_q,v_q)+l(q)\chi_-,
	\end{equation}satisfying $P(\Psi(q))\equiv P((\eta_c,v_c))$. This allows us to build a negative direction for the Hessian $H_c$ given by \eqref{def:hess}  with some orthogonality property.
	\begin{lemma}[Corollary~5.1 in \cite{linbubbles}]\label{lem:y0}
		There exists $y_0\in X_s$ such that $\langle By_0,(\eta_c,v_c)\rangle=0$ and $\langle H_cy_0,y_0\rangle<0$, where $B$ is given by \eqref{eq:B}.
	\end{lemma}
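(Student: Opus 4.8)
The plan is to produce $y_0$ as an explicit combination of the two distinguished directions already at hand: the tangent $w:=\partial_c(\eta_c,v_c)$ to the branch of bound states, and the negative direction $\chi_-$ of $H_c$ from \eqref{def:chi-}. First I would differentiate the bound state relation $E'((\eta_c,v_c))=cP'((\eta_c,v_c))$ with respect to $c$, which is licit by the $\boC^1$ dependence in \textit{Assumption~2}; using $H_c=E''-cP''$ this yields the fundamental identity
\begin{equation*}
	H_c w = P'((\eta_c,v_c)).
\end{equation*}
Pairing with $w$ and $\chi_-$ and recalling \eqref{eq:critgss} gives the two scalars
\begin{equation*}
	\langle H_c w,w\rangle=\langle P'((\eta_c,v_c)),w\rangle=\tfrac{d}{dc}P((\eta_c,v_c))=-d''(c),\qquad \langle H_c w,\chi_-\rangle=\langle P'((\eta_c,v_c)),\chi_-\rangle=:\gamma,
\end{equation*}
while \eqref{eq:H} together with $L_c\chi_1=\mu_-\chi_1$ gives $\langle H_c\chi_-,\chi_-\rangle=\langle L_c\chi_1,\chi_1\rangle=\mu_-\|\chi_1\|_{ L^2(\R)}^2=:-\beta<0$. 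I would also record that, since $P'(\eta,v)=-(v,\eta)$ and $B$ is given by \eqref{eq:B}, the required orthogonality $\langle By_0,(\eta_c,v_c)\rangle=0$ is exactly the condition $\langle P'((\eta_c,v_c)),y_0\rangle=0$.

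Next, under the instability hypothesis \eqref{eq:critunstab} we have $d''(c)<0$, hence $\langle H_c w,w\rangle=-d''(c)>0$; combined with $\langle H_c\chi_-,\chi_-\rangle=-\beta<0$ this shows that $w$ and $\chi_-$ are linearly independent and that $H_c$ restricted to $\mathrm{span}\{w,\chi_-\}$ has Gram matrix
\begin{equation*}
	\begin{pmatrix}-d''(c)&\gamma\\ \gamma&-\beta\end{pmatrix},\qquad\det=\beta\,d''(c)-\gamma^2<0,
\end{equation*}
so this $2\times2$ form is indefinite. I would then impose the single linear constraint $\langle P'((\eta_c,v_c)),\cdot\rangle=0$ on this plane: writing a generic element $sw+t\chi_-$, the constraint reads $-s\,d''(c)+t\gamma=0$, solved by $y_0:=\gamma w+d''(c)\chi_-$. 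A direct expansion then gives
\begin{equation*}
	\langle H_cy_0,y_0\rangle=d''(c)\big(\gamma^2-d''(c)\beta\big)<0,
\end{equation*}
because $d''(c)<0$ and $\gamma^2-d''(c)\beta=\gamma^2+|d''(c)|\beta>0$. Moreover $y_0\neq0$ since its $\chi_-$-coefficient $d''(c)$ is nonzero, and $y_0\in X_s$ because $w=\partial_c(\eta_c,v_c)\in X_s$ by the smoothness and exponential decay of the branch (Theorem~\ref{thm:classiftw} and estimate \eqref{eq:decaydark}), while $\chi_-\in X_s$ as already noted. Thus $y_0$ meets both requirements of the lemma.

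Finally, I would point out that this $y_0$ is, up to a scalar, nothing but the velocity $\Psi'(c)$ of the curve \eqref{eq:minunstab}: differentiating $P(\Psi(q))\equiv P((\eta_c,v_c))$ at $q=c$ reproduces the constraint $\langle P'((\eta_c,v_c)),\Psi'(c)\rangle=0$, and when $\gamma\neq0$ the implicit function theorem forces $l'(c)=d''(c)/\gamma$, i.e.\ $\Psi'(c)=w+(d''(c)/\gamma)\chi_-$, which is parallel to $y_0$. The subspace formulation above is slightly more robust in that it also covers the degenerate case $\gamma=0$, where one simply takes $y_0=\chi_-$. The only genuine points requiring care are the justification of the $c$-differentiation of the bound state relation, which is granted by \textit{Assumption~2}, and the verification that $\partial_c(\eta_c,v_c)$ inherits the decay needed for $y_0\in X_s$, which is supplied by \eqref{eq:decaydark}; once the identity $H_cw=P'((\eta_c,v_c))$ is in hand, the remaining algebra is elementary.
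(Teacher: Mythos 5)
Your proof is correct, and it reaches the right object, but by a genuinely different route from the paper. The paper (following Corollary~5.1 in \cite{linbubbles}) obtains $y_0$ from the momentum-preserving curve $\Psi(q)=(\eta_q,v_q)+l(q)\chi_-$ of \eqref{eq:minunstab}: the implicit function theorem (which needs the nondegeneracy $\gamma=\langle P'((\eta_c,v_c)),\chi_-\rangle\neq0$) produces $l(q)$, the orthogonality $\langle By_0,(\eta_c,v_c)\rangle=0$ comes from differentiating $P(\Psi(q))\equiv P((\eta_c,v_c))$, and the negativity $\langle H_cy_0,y_0\rangle<0$ for $y_0=\Psi'(c)$ is extracted from the second-order expansion of the energy along the curve, the same expansion that gives $E(\Psi(q))<E((\eta_c,v_c))$ in Theorem~\ref{thm:unstablin}. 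You bypass the curve entirely: the differentiated bound-state identity $H_c\partial_c(\eta_c,v_c)=P'((\eta_c,v_c))$ --- which is precisely the identity the paper records in its appendix just before \eqref{eq:dcu} --- reduces everything to a $2\times2$ quadratic-form computation on $\mathrm{span}\{\partial_c(\eta_c,v_c),\chi_-\}$ intersected with the hyperplane $\langle P'((\eta_c,v_c)),\cdot\rangle=0$. Your algebra checks out: $\langle H_cy_0,y_0\rangle=d''(c)\bigl(\gamma^2-d''(c)\beta\bigr)<0$ because $d''(c)=-\tfrac{d}{dc}P((\eta_c,v_c))<0$ under the standing hypothesis \eqref{eq:critunstab} of Section~3 and $\beta=-\mu_-\|\chi_1\|_{L^2(\R)}^2>0$; the orthogonality is exactly your imposed constraint since $P'(u)=Bu$ and $B$ is symmetric; $y_0\in X_s$ follows from \eqref{eq:decaydark} (which covers $\partial_c$-derivatives) together with $v_c=-c\eta_c/(2(1-\eta_c))$; and $y_0\neq0$ is automatic from $\langle H_cy_0,y_0\rangle<0$. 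As for what each approach buys: yours is more elementary and self-contained, and it handles the degenerate case $\gamma=0$ (where you take $y_0=\chi_-$), which the implicit-function-theorem construction excludes; the paper's curve, however, is not wasted effort, since $\Psi(q)$ is needed anyway as the family of initial data with equal momentum and strictly lower energy driving the instability argument of Theorem~\ref{thm:unstablin}, so the lemma there comes as a byproduct of an object constructed for other purposes. Your closing observation that, when $\gamma\neq0$, your $y_0$ is proportional to $\Psi'(c)$ correctly identifies the two constructions as the same direction in $X$.
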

	Now, we want to build a negative direction as in Lemma~\ref{lem:y0} for which the number of particles \eqref{pnumb} and twisting angle \eqref{twist} are zero. We use the following.
	\begin{lemma}\label{lem:orthint}
		For all $\eta\in H^2(\R)$ and $\nu\in\R$, there exists a sequence $(u_n)\subset X_s$ such that $\int_\R u_n=\nu$,~$u_n\to0$ in $ H^1(\R)$ and $\langle u_n,\eta\rangle_X=0$.
	\end{lemma}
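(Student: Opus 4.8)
The plan is to build $u_n$ as a fixed amount of mass $\nu$ spread out over a window of length of order $n$, then corrected by a single fixed profile to repair the orthogonality. Spreading the mass is precisely what forces $\|u_n\|_{H^1(\R)}\to0$ while keeping $\int_\R u_n=\nu$ fixed, and the correction term costs only an asymptotically negligible amount in $H^1$.

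First I would fix a profile $\rho\in C_c^\infty(\R)$ with $\int_\R\rho=\nu$ and set $w_n(x)=\tfrac1n\rho(x/n)$. A change of variables gives $\int_\R w_n=\nu$ for every $n$, while the scaling identities $\|w_n\|_{L^2(\R)}^2=\tfrac1n\|\rho\|_{L^2(\R)}^2$ and $\|\partial_xw_n\|_{L^2(\R)}^2=\tfrac1{n^3}\|\partial_x\rho\|_{L^2(\R)}^2$ show $\|w_n\|_{H^1(\R)}\to0$. Each $w_n$ is smooth and compactly supported, hence lies in $X_s$. By Cauchy--Schwarz, $|\langle w_n,\eta\rangle_X|\le\|w_n\|_{H^1(\R)}\|\eta\|_{H^1(\R)}\to0$, so $w_n$ already meets every requirement except the exact orthogonality $\langle w_n,\eta\rangle_X=0$.

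To enforce the orthogonality I would subtract a fixed correction. If $\eta=0$ there is nothing to do and $u_n=w_n$ works. If $\eta\ne0$, the key claim is that there exists $\phi\in X_s$ with $\int_\R\phi=0$ and $\langle\phi,\eta\rangle_X\ne0$. This is the statement that the functional $\phi\mapsto\langle\phi,\eta\rangle_X$ does not vanish identically on the hyperplane $\{\phi:\int_\R\phi=0\}$; were it to vanish there, it would be a scalar multiple $\lambda$ of $\phi\mapsto\int_\R\phi$, i.e.\ $\langle\phi,\eta\rangle_{H^1(\R)}=\lambda\int_\R\phi$ for all test functions $\phi$. Reading this in $\mathcal{D}'(\R)$ gives $\eta-\partial_{xx}\eta=\lambda$, whose general solution is $\lambda+Ae^{x}+Be^{-x}$; the only representative in $L^2(\R)$ is $\eta\equiv0$, a contradiction. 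Granting such a $\phi$, I set
\[
u_n=w_n-\frac{\langle w_n,\eta\rangle_X}{\langle\phi,\eta\rangle_X}\,\phi .
\]
Then $\langle u_n,\eta\rangle_X=0$ by construction, $\int_\R u_n=\nu$ since $\int_\R\phi=0$, and $u_n\in X_s$ as a linear combination of elements of $X_s$; finally $\|u_n\|_{H^1(\R)}\le\|w_n\|_{H^1(\R)}+|\langle\phi,\eta\rangle_X|^{-1}\,|\langle w_n,\eta\rangle_X|\,\|\phi\|_{H^1(\R)}\to0$.

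The only genuinely non-routine step is the existence of the zero-mean correction $\phi$ that is not orthogonal to $\eta$; this is exactly the assertion that the mass constraint $\int_\R\cdot\,=0$ and the orthogonality constraint $\langle\cdot,\eta\rangle_X=0$ are independent, and it is settled by the short distributional ODE argument above. All remaining points reduce to the elementary scaling estimates for $w_n$.
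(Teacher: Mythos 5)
Your proof is correct and follows essentially the same route as the paper: a scaled bump $\varphi(\cdot/n)/n$ carrying the mass $\nu$, plus a fixed zero-mean correction whose coefficient is chosen to enforce $\langle u_n,\eta\rangle_X=0$ and is controlled by $\|\varphi(\cdot/n)/n\|_{H^1(\R)}\to 0$. The only (cosmetic) difference is how the correction is produced: the paper takes it of the form $\psi_x$ with $\psi\in C_c^\infty(\R)$ chosen by density so that $\langle\eta_x,\psi\rangle_{H^1(\R)}\neq 0$ (zero mean being automatic for a derivative), whereas you obtain a zero-mean $\phi$ from the hyperplane/duality argument together with the observation that $\eta-\partial_{xx}\eta=\lambda$ admits no nonzero solution in $L^2(\R)$; both justifications are valid.
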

	\begin{proof}
		The proof proceeds as in \cite{linbubbles}; let $\varphi\in\boC^\infty_c(\R) $ with $\int_\R\varphi=\nu$. First if $\eta\equiv0$, then taking $u_n(\cdot)=\varphi(\cdot/n)/n$ for any $\varphi$ defined above suffices.
		Assuming that $\eta\ne0$, then by a duality and density argument there exists $\psi\in\boC^\infty_c(\R)$ such that $\langle \eta_x,\psi\rangle_{ H^1(\R)}\ne0$. Indeed, otherwise we would have $\eta_x=0$ but since $\eta\in L^2(\R)$ this contradicts $\eta\ne0$. 
		Letting $u_n(\cdot)=\varphi(\cdot/n)/n+\alpha_n\psi_x$, we readily get $u_n\in X_s$ and $\int_\R u_n=\nu$.
		Then, setting $\alpha_n=\langle\varphi(\cdot/n)/n,\eta\rangle_{ H^1(\R)}/\langle\eta_x,\psi\rangle_{ H^1(\R)},$ we obtain $\langle\eta,u_n\rangle_{ H^1(\R)}=0$. Finally, we deduce after rescaling that $\alpha_n\to0$ using $|\alpha_n||\langle\eta_x,\psi\rangle_{ H^1(\R)}|\leq||\eta||_{ H^1(\R)}||\varphi(\cdot/n)/n||_{ H^1(\R)}$ and also $u_n\to0$ in $ H^1(\R)$.
	\end{proof}
	Since $s>2+1/2$, we can apply Lemma~\ref{lem:orthint} to each component of $y_0=(\eta_0,v_0)$ where $y_0\in X_s$ is given by Lemma~\ref{lem:y0}.
	\begin{lemma}[Lemma~5.3 in \cite{linbubbles}]\label{lem:dirneg}
		For all $s>2+1/2$, there exists $y=(\eta_1,v_1)\in X_s$ such that $\langle By,(\eta_c,v_c)\rangle=0,$ $\langle H_cy,y\rangle<0$ and 
		$$\int_\R \eta_1=\int_\R v_1=0.$$
	\end{lemma}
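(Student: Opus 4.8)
The plan is to start from the vector $y_0=(\eta_0,v_0)\in X_s$ produced by Lemma~\ref{lem:y0}, which already satisfies $\langle By_0,(\eta_c,v_c)\rangle=0$ and $\langle H_cy_0,y_0\rangle<0$, and to correct it by perturbations that vanish in the $X$-norm so as to additionally enforce $\int_\R\eta_1=\int_\R v_1=0$. The key structural observation is that, by \eqref{eq:Hfull}, the quadratic form $(\delta\eta,\delta v)\mapsto\langle H_c(\delta\eta,\delta v),(\delta\eta,\delta v)\rangle$ is bounded on $X=H^1(\R)\times L^2(\R)$: its coefficients are controlled because $0<\eta_c\le1-c^2/2$ keeps $1-\eta_c$ bounded away from $0$ while $\eta_c,v_c$ are bounded. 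Hence the strict inequality $\langle H_cy,y\rangle<0$ is preserved under any perturbation of $y_0$ that is small in $X$. Likewise, writing $\langle B(\eta,v),(\eta_c,v_c)\rangle=-\int_\R v\eta_c-\int_\R\eta v_c$, this pairing depends continuously on $(\eta,v)$ in the $L^2$-topology, so small $X$-perturbations move it only slightly.

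First I would remove the two integrals. Applying Lemma~\ref{lem:orthint} to the first ($\eta$-) component with $\nu=-\int_\R\eta_0$ and to the second ($v$-) component with $\nu=-\int_\R v_0$ produces sequences $p_n,q_n\in X_s$ with $\int_\R p_n=-\int_\R\eta_0$, $\int_\R q_n=-\int_\R v_0$ and $p_n,q_n\to0$ in $H^1(\R)$. Setting $y_n=(\eta_0+p_n,v_0+q_n)\in X_s$, both integral constraints then hold exactly, whereas the bilinear constraint becomes $\langle By_n,(\eta_c,v_c)\rangle=-\int_\R q_n\eta_c-\int_\R p_n v_c=:\varepsilon_n$, which tends to $0$ since $p_n,q_n\to0$ in $L^2(\R)$ and $\eta_c,v_c\in L^2(\R)$. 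The orthogonality clause of Lemma~\ref{lem:orthint} can be used to annihilate part of this residual, but it is not essential for what follows.

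It remains to cancel the residual $\varepsilon_n$ exactly without disturbing the two integral conditions. Since $v_c$ is non-constant (indeed $v_c=-c\eta_c/(2(1-\eta_c))\not\equiv0$ by \eqref{eq:twhydro}), the linear map $w_1\mapsto\big(\int_\R w_1,\int_\R w_1v_c\big)$ has two-dimensional range on $\boC^\infty_c(\R)$, so I may fix once and for all a $w_1\in\boC^\infty_c(\R)\subset X_s$ with $\int_\R w_1=0$ and $\int_\R w_1v_c\ne0$. Correcting the first component by $t_nw_1$ with $t_n=\varepsilon_n/\int_\R w_1v_c\to0$, the vector $y=(\eta_0+p_n+t_nw_1,\,v_0+q_n)$ still has zero integrals in both components (as $\int_\R w_1=0$), now satisfies $\langle By,(\eta_c,v_c)\rangle=\varepsilon_n-t_n\int_\R w_1v_c=0$, and lies in $X_s$. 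Choosing $n$ large enough that the total perturbation $\|(p_n+t_nw_1,q_n)\|_X$ is small, the continuity of $\langle H_c\cdot,\cdot\rangle$ on $X$ yields $\langle H_cy,y\rangle<0$. Setting $(\eta_1,v_1)=y$ concludes.

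I expect the main obstacle to be the bookkeeping needed to meet all three constraints simultaneously and exactly: the prescribed-mass corrections from Lemma~\ref{lem:orthint} come with $H^1$/$L^2$ smallness, whereas the momentum-type constraint $\langle By,(\eta_c,v_c)\rangle=0$ pairs the perturbation against $\eta_c,v_c$ in $L^2$ and so cannot simply be imposed term-by-term in the construction of $p_n,q_n$. The device above is to let that constraint hold only asymptotically and then restore it exactly with the single vanishing correction $t_nw_1$, taken in a direction of zero mass but nonzero $B$-pairing. The remaining points, both routine given Theorem~\ref{thm:classiftw}, are the uniform bounds on the coefficients of $H_c$ that make $\langle H_c\cdot,\cdot\rangle$ continuous on $X$, and the check that each correction stays in $X_s$.
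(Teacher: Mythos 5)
Your proof is correct, and it follows the same overall strategy as the paper: start from $y_0=(\eta_0,v_0)$ given by Lemma~\ref{lem:y0}, add corrections produced by Lemma~\ref{lem:orthint} with $\nu=-\int_\R\eta_0$ and $\nu=-\int_\R v_0$ to kill the two integrals, and conclude that $\langle H_c\cdot,\cdot\rangle<0$ survives because the quadratic form is bounded on $X$ (coefficients controlled by $1-\eta_c\geq c^2/2$ and the smoothness and decay of $\eta_c,v_c$) while the corrections vanish in $X$. The one place where you genuinely depart from the paper is the treatment of the constraint $\langle By,(\eta_c,v_c)\rangle=0$. The paper's justification (a single sentence) is designed so that the orthogonality clause of Lemma~\ref{lem:orthint} preserves this constraint exactly at every step: applying the lemma to the second component with the function $\eta_c$ gives $(q_n,\eta_c)_{L^2(\R)}=0$, and applying it to the first component with the function $(1-\partial_{xx})^{-1}v_c\in H^2(\R)$ turns the $X$-orthogonality $\langle p_n,(1-\partial_{xx})^{-1}v_c\rangle_{H^1(\R)}=0$ into $(p_n,v_c)_{L^2(\R)}=0$; note that the clause is stated in the $X$-inner product, so using it on the $L^2$-pairing defining $B$ requires exactly this preconditioning, a point the paper leaves implicit. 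You instead discard the clause, let the pairing drift to a residual $\varepsilon_n\to0$, and restore it exactly with the one-dimensional correction $t_nw_1$, where $w_1\in\boC^\infty_c(\R)$ has zero mass and $\int_\R w_1v_c\ne0$ (such a $w_1$ exists since $v_c=-c\eta_c/(2(1-\eta_c))$ is continuous, decaying and nontrivial, hence non-constant). Both routes are valid; yours trades the slightly delicate choice of the auxiliary function in Lemma~\ref{lem:orthint} for an explicit finite-dimensional repair, which makes the bookkeeping more transparent at the cost of one extra correction term.
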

	Following along Section~5 in \cite{linbubbles} we can deduce that $(\eta_c,v_c)$ is not a minimizer of the energy at fixed momentum and that it is unstable. More precisely, let $(\eta,v)=\Psi(q)$ where $\Psi(\cdot)$ is given by \eqref{eq:minunstab} and $q$ is in the vicinity of $c$. Taking $\psi=\sqrt{1-\eta}e^{i\theta}$ with $\theta(x)=\theta_c(0)+\int_0^xv$, we can prove that $\psi$ is arbitrarily close to $u_{c,\kappa}$ for $d_{\mathrm{hy}}$ as $q\to c$ with $q\ne c$. This implies that eventually as $q\to c$ we have $|\psi|(\R)\subset\subset J_\kappa$ and $\psi\in u_{c}+ H^{{s}}$ so that there exists a solution to \eqref{QGP} starting at $\psi$ at $t=0$. The instability of $u_c$ for $d_{\mathrm{hy}}$ follows as in \cite{linbubbles}. 
	\begin{theorem}[Theorem~5.1 in \cite{linbubbles}]\label{thm:unstablin}
		Let $c\in(0,\sqrt{2})$, $s>2+1/2$ and assume that $(\eta_c,v_c)\in X_s$ is the dark soliton. If $\partial_cP((\eta_c,v_c))>0,$ then the curve $\Psi(\cdot)$ given by \eqref{eq:minunstab} defined in the vicinity of $c$ satisfies 
		$P(\Psi(\cdot))\equiv P((\eta_c,v_c))$ and $E(\Psi(q))<E((\eta_c,v_c)),$ when $q\ne c$. 
		Moreover the dark soliton $u_c$ is unstable for $d_\mathrm{hy}$.
	\end{theorem}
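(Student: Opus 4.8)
The statement splits into two claims: the geometric fact that the curve $\Psi(\cdot)$ carries constant momentum and strictly decreasing energy, and the dynamical instability of $u_c$ for $d_{\mathrm{hy}}$. The constancy of $P$ along $\Psi$ holds by construction, since $l(q)$ was produced by the implicit function theorem precisely to enforce $P(\Psi(q))\equiv P((\eta_c,v_c))$; this is where the nondegeneracy $\langle P'((\eta_c,v_c)),\chi_-\rangle\ne0$ (which also makes the implicit function theorem applicable) is used. It then remains to show that $q\mapsto E(\Psi(q))$ has a strict local maximum at $q=c$. First I would differentiate once: because $\Psi(c)=(\eta_c,v_c)$ is a bound state, $E'((\eta_c,v_c))=cP'((\eta_c,v_c))$, and because $P(\Psi(q))$ is constant, $\frac{d}{dq}E(\Psi(q))|_{q=c}=\langle E'((\eta_c,v_c)),\Psi'(c)\rangle=c\langle P'((\eta_c,v_c)),\Psi'(c)\rangle=0$, so $q=c$ is a critical point.

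Differentiating a second time and using these same two facts to cancel the contribution of $\Psi''(c)$ against the vanishing second derivative of the momentum constraint, one obtains $\frac{d^2}{dq^2}E(\Psi(q))|_{q=c}=\langle H_c\Psi'(c),\Psi'(c)\rangle$. To sign this, the key identity is obtained by differentiating the bound-state relation $E'((\eta_q,v_q))=qP'((\eta_q,v_q))$ in $q$ at $q=c$, giving $H_c\,\partial_c(\eta_c,v_c)=P'((\eta_c,v_c))$. Writing $\Psi'(c)=\partial_c(\eta_c,v_c)+l'(c)\chi_-$, expanding the quadratic form, and eliminating $l'(c)$ through the constraint $\langle P'((\eta_c,v_c)),\Psi'(c)\rangle=0$, the expression simplifies to a combination of $\partial_cP((\eta_c,v_c))$ and $\langle H_c\chi_-,\chi_-\rangle=\langle L_c\chi_1,\chi_1\rangle<0$. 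Under the VK instability hypothesis $\partial_cP((\eta_c,v_c))>0$ both contributions are strictly negative, so $\langle H_c\Psi'(c),\Psi'(c)\rangle<0$; hence $q=c$ is a strict local maximum and $E(\Psi(q))<E((\eta_c,v_c))$ for $q\ne c$ near $c$.

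For the dynamical instability I would follow Lin. The curve supplies, for each $q\ne c$ near $c$, initial data $\psi=\sqrt{1-\eta}e^{i\theta}$ with $(\eta,v)=\Psi(q)$ and $\theta(x)=\theta_c(0)+\int_0^x v$, which is $d_{\mathrm{hy}}$-arbitrarily close to $u_c$, lies in $u_c+H^{s}(\R)$, and has $|\psi|(\R)\subset\subset J_\kappa$, so that Theorem~\ref{thm:lwpEK} furnishes a solution on which $E$ and $P$ are conserved, with $E<E((\eta_c,v_c))$ and $P=P((\eta_c,v_c))$. The instability itself comes from a monotonicity argument built on the negative direction $y=(\eta_1,v_1)$ of Lemma~\ref{lem:dirneg}: since $\int_\R\eta_1=\int_\R v_1=0$, the vector $y$ lies in the range of $J$, so one may write $y=J\zeta$ and define a Lyapunov functional $\mathcal A$, bounded on a small tube around the orbit (modulo translations), whose time derivative along the flow is bounded below by a fixed positive constant. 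The lower bound on $\dot{\mathcal A}$ uses the conservation laws, the strict energy deficit, the relation $\langle By,(\eta_c,v_c)\rangle=0$, and the coercivity of $H_c$ on $P_c$ together with the single negative mode. Boundedness of $\mathcal A$ against a uniform positive lower bound on $\dot{\mathcal A}$ forces the solution to exit the tube in finite time, which is exactly orbital instability for $d_{\mathrm{hy}}$.

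The genuine obstacle is this last step. Because $J$ is not onto (Remark~\ref{rem:onto}), the instability half of the Grillakis--Shatah--Strauss theorem cannot be invoked directly, and the sole purpose of the refined direction $y$ with vanishing number of particles and twisting angle is to guarantee $y\in\mathrm{Range}(J)$ so that $\mathcal A$ is well defined. Making the monotonicity quantitative — producing the uniform lower bound on $\dot{\mathcal A}$ inside the tube while controlling the modulation parameter and checking that the perturbed data remain in the local well-posedness class $J_\kappa$ of Theorem~\ref{thm:lwpEK} so that the solution exists long enough to escape — is the delicate part, and is where I would reproduce Lin's estimates most carefully. By contrast, the energy-decrease computation of the first two paragraphs is essentially forced once the identity $H_c\,\partial_c(\eta_c,v_c)=P'((\eta_c,v_c))$ is in hand.
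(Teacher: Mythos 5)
Your proposal is correct and takes essentially the same approach as the paper: the paper likewise builds the constant-momentum curve $\Psi(q)$ by the implicit function theorem, constructs the negative direction with vanishing number of particles and twisting angle precisely so that it lies in $\mathrm{Range}(J)$ (circumventing Remark~\ref{rem:onto}), and defers the energy-decrease computation and the auxiliary-functional monotonicity argument to Section~5 of \cite{linbubbles}. Your explicit expansion of $\langle H_c\Psi'(c),\Psi'(c)\rangle$ via the identity $H_c\,\partial_c(\eta_c,v_c)=P'((\eta_c,v_c))$, yielding $-\partial_cP((\eta_c,v_c))+l'(c)^2\langle H_c\chi_-,\chi_-\rangle<0$, is exactly Lin's computation and is consistent with \eqref{eq:dcu}.
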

	
	%The next results aim to build a Lyapunov functional in the neighborhood of the orbit of dark solitons that is increasing in time. Namely, for any $\varepsilon>0$, we define
	%\begin{equation}\label{eq:Uep}
	%	U_\varepsilon=\{u\in X: \inf_{s\in\R}||u-(\eta_c,v_c)(\cdot-s)||_X<\varepsilon\}.
	%\end{equation}
	%and we define for some $\varepsilon>0$ small enough and all $\Psi\in U_\varepsilon$
	%\begin{equation}
	%	V(\Psi)=y(\cdot-\alpha(\Psi))+\langle By() \rangle
	%\end{equation}
	\section{Stability in the energy distance $d_\mathcal{X}$}\label{sec:equiv}
	Let $\kappa<1/2$ and $c\in(0,\sqrt2)$, we first show that the stability and instability results stated in terms of $d_{\mathrm{hy}}$ given by \eqref{def:metric2} in Theorem~\ref{thm:stabgss} and in Theorem~\ref{thm:unstablin} respectively, are equivalent to the stability and instability result in Theorem~\ref{thm:critstab} in terms of the distance $d_\mathcal{X}$ given by \eqref{def:metric} respectively; thus proving Theorem~\ref{thm:critstab}. For deeper insights on this equivalence we refer to Lemma~9--10 in \cite{chiron-stability}.
	\begin{lemma}\label{lem:dequiv}
		Let $\kappa<1/2$, $c\in(0,\sqrt{2})$ and $u_{c,\kappa}$ be a dark soliton given by Theorem~\ref{thm:classiftw}. Then there exists $K$ and $\varepsilon>0$ depending only on $u_{c,\kappa}$ such that for every $\Psi\in\mathcal{X}(\R)$, if $\Psi$ satisfies $d_\mathcal{X}(\Psi,u_{c,\kappa})<\varepsilon$, then we can write $\Psi=\sqrt{1-\eta}e^{i\theta}$ with $(\eta,\partial_x\theta)\in X$ and we have
		\begin{equation}
			\frac{1}{K}d_{\mathrm{hy}}(\Psi,u_{c,\kappa})\leq d_{\mathcal{X}}(\Psi,u_{c,\kappa})\leq Kd_{\mathrm{hy}}(\Psi,u_{c,\kappa}).
		\end{equation} 
	\end{lemma}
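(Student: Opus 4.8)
The plan is to first use the smallness of $d_{\mathcal{X}}(\Psi,u_{c,\kappa})$ to force $\Psi$ to stay bounded away from zero, so that the hydrodynamical variables $(\eta,v)$ are well defined, and then to compare the two distances term by term. The exponential decay of $u_{c,\kappa}$ recorded in \eqref{eq:decaydark} is the tool that tames every ``bad'' cross term.

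\emph{Step 1 (nonvanishing and $(\eta,v)\in X$).} Set $m=|\Psi|-|u_{c,\kappa}|$. By \eqref{def:metric} one has $\|m\|_{L^2(\R)}\le d_{\mathcal{X}}(\Psi,u_{c,\kappa})$, while the pointwise bound $|\partial_x|\Psi||\le|\partial_x\Psi|$ gives $\|m'\|_{L^2(\R)}\le\|\partial_x\Psi\|_{L^2(\R)}+\|\partial_xu_{c,\kappa}\|_{L^2(\R)}$, which is bounded by a constant depending only on $u_{c,\kappa}$ once $\varepsilon\le1$. Hence $m\in H^1(\R)$, and the Gagliardo--Nirenberg inequality $\|m\|_{L^\infty(\R)}^2\le2\|m\|_{L^2(\R)}\|m'\|_{L^2(\R)}$ yields $\|m\|_{L^\infty(\R)}\le C(u_{c,\kappa})\sqrt{\varepsilon}$. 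Since $\inf_{\R}|u_{c,\kappa}|\ge c/\sqrt2>0$ by Theorem~\ref{thm:classiftw}, choosing $\varepsilon$ small enough gives $\inf_{\R}|\Psi|\ge c/(2\sqrt2)>0$ and $\sup_{\R}|\Psi|\le2$. We may thus write $\Psi=\sqrt{1-\eta}\,e^{i\theta}$ with a continuous determination $\theta$ normalized so that $\theta(0)-\theta_c(0)=\mathrm{Arg}(\Psi(0)/u_{c,\kappa}(0))$; that $(\eta,\partial_x\theta)\in X$ follows from $\eta=1-|\Psi|^2\in L^2(\R)$, $\partial_x\eta=-2\Re(\overline{\Psi}\,\partial_x\Psi)\in L^2(\R)$, and $v:=\partial_x\theta=|\Psi|^{-2}\Im(\overline{\Psi}\,\partial_x\Psi)\in L^2(\R)$, using the lower and upper bounds on $|\Psi|$.

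\emph{Step 2 (the bound $d_{\mathcal{X}}\lesssim d_{\mathrm{hy}}$).} Writing $v_c=\partial_x\theta_c$ and using the polar identity \eqref{eq:polarder}, I would decompose
\[
\partial_x\Psi-\partial_xu_{c,\kappa}=(\partial_x|\Psi|-\partial_x|u_{c,\kappa}|)e^{i\theta}+i(|\Psi|v-|u_{c,\kappa}|v_c)e^{i\theta}+(\partial_x|u_{c,\kappa}|+i|u_{c,\kappa}|v_c)(e^{i\theta}-e^{i\theta_c}).
\]
The first two groups are controlled in $L^2(\R)$ by $\|\eta-\eta_c\|_{H^1(\R)}+\|v-v_c\|_{L^2(\R)}$, using that $\eta,\eta_c$ are bounded away from $1$ (so $\eta\mapsto\sqrt{1-\eta}$ and its derivative are Lipschitz on the relevant range), together with $\|m\|_{L^\infty(\R)}\lesssim\|\eta-\eta_c\|_{H^1(\R)}$ and the bound on $\|v\|_{L^2(\R)}$. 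The third group is the delicate one: $\theta-\theta_c$ is not uniformly small, but $|\theta(x)-\theta_c(x)|\le|\mathrm{Arg}(\Psi(0)/u_{c,\kappa}(0))|+|x|^{1/2}\|v-v_c\|_{L^2(\R)}$, while the coefficient $\partial_x|u_{c,\kappa}|+i|u_{c,\kappa}|v_c=e^{-i\theta_c}\partial_xu_{c,\kappa}$ decays like $e^{-C|x|}$ by \eqref{eq:decaydark}; since $\int_{\R}(1+|x|)e^{-2C|x|}\,dx<\infty$, this group is $\lesssim|\mathrm{Arg}(\Psi(0)/u_{c,\kappa}(0))|+\|v-v_c\|_{L^2(\R)}$. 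Combined with $\||\Psi|-|u_{c,\kappa}|\|_{L^2(\R)}\lesssim\|\eta-\eta_c\|_{L^2(\R)}$ and $|\Psi(0)-u_{c,\kappa}(0)|\le|m(0)|+|u_{c,\kappa}(0)|\,|e^{i\theta(0)}-e^{i\theta_c(0)}|\lesssim\|\eta-\eta_c\|_{H^1(\R)}+|\mathrm{Arg}(\Psi(0)/u_{c,\kappa}(0))|$, this yields $d_{\mathcal{X}}(\Psi,u_{c,\kappa})\le Kd_{\mathrm{hy}}(\Psi,u_{c,\kappa})$.

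\emph{Step 3 (the bound $d_{\mathrm{hy}}\lesssim d_{\mathcal{X}}$) and the main obstacle.} Here $\eta-\eta_c=-m(|\Psi|+|u_{c,\kappa}|)$ gives $\|\eta-\eta_c\|_{L^2(\R)}\lesssim\|m\|_{L^2(\R)}\le d_{\mathcal{X}}$, and the splitting $\partial_x(\eta-\eta_c)=-2\Re(\overline{\Psi}(\partial_x\Psi-\partial_xu_{c,\kappa}))-2\Re(\overline{(\Psi-u_{c,\kappa})}\,\partial_xu_{c,\kappa})$ controls the $H^1$ norm. The delicate point, which is the crux of the whole lemma, is that $\Psi-u_{c,\kappa}$ is \emph{not} in $L^2(\R)$ and is only polynomially bounded, via $|\Psi(x)-u_{c,\kappa}(x)|\le|\Psi(0)-u_{c,\kappa}(0)|+|x|^{1/2}\|\partial_x\Psi-\partial_xu_{c,\kappa}\|_{L^2(\R)}$; it becomes harmless only because it always appears multiplied by the exponentially decaying $\partial_xu_{c,\kappa}$, again by \eqref{eq:decaydark}. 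The same mechanism, together with the lower bounds on $|\Psi|$ and $|u_{c,\kappa}|$, controls $\|v-v_c\|_{L^2(\R)}$ after writing $v-v_c$ as a sum of terms carrying either $\partial_x\Psi-\partial_xu_{c,\kappa}$, $\Psi-u_{c,\kappa}$, or $\eta-\eta_c$. Finally $|\mathrm{Arg}(\Psi(0)/u_{c,\kappa}(0))|\lesssim|\Psi(0)-u_{c,\kappa}(0)|$ since $z\mapsto\mathrm{Arg}(z/u_{c,\kappa}(0))$ is Lipschitz near $u_{c,\kappa}(0)\ne0$. This gives $d_{\mathrm{hy}}(\Psi,u_{c,\kappa})\le Kd_{\mathcal{X}}(\Psi,u_{c,\kappa})$, completing the equivalence; the only genuinely nontrivial ingredient is the interplay between the $|x|^{1/2}$ growth of the phase $\theta-\theta_c$ and of $\Psi-u_{c,\kappa}$ on the one hand, and the exponential localization \eqref{eq:decaydark} of the soliton on the other.
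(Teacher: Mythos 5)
Your proposal is correct, and its core coincides with the paper's proof: your Step~3 is exactly the argument given in the paper for the bound $d_{\mathrm{hy}}\leq K d_{\mathcal{X}}$, resting on the same mechanism --- the difference $\Psi-u_{c,\kappa}$ grows at most like $|x|^{1/2}$ by Cauchy--Schwarz, and this growth is absorbed by the exponential decay \eqref{eq:decaydark} of $\partial_x u_{c,\kappa}$ whenever the two appear multiplied together. The differences are at the margins and both favor your write-up. First, in Step~1 you quantify the nonvanishing of $\Psi$ via the Gagliardo--Nirenberg inequality applied to $m=|\Psi|-|u_{c,\kappa}|$, which also gives $\sup_{\R}|\Psi|\leq 2$ for free; the paper instead gets the $L^\infty$ bound \eqref{eq:inftybound} by a Morrey-type averaging of \eqref{eq:morrey}, a slightly more roundabout route to the same conclusion. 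Second, and more substantially, for the converse bound $d_{\mathcal{X}}\leq K d_{\mathrm{hy}}$ the paper only says ``ideas along the same lines as in Lemma~10 in \cite{chiron-stability}'' and gives no details, whereas your Step~2 proves it outright via the polar decomposition
\begin{equation*}
\partial_x\Psi-\partial_xu_{c,\kappa}=(\partial_x|\Psi|-\partial_x|u_{c,\kappa}|)e^{i\theta}+i(|\Psi|v-|u_{c,\kappa}|v_c)e^{i\theta}+e^{-i\theta_c}\partial_xu_{c,\kappa}\,(e^{i\theta}-e^{i\theta_c}),
\end{equation*}
handling the last term by the same growth-versus-decay argument (here applied to the phase $\theta-\theta_c$ rather than to $\Psi-u_{c,\kappa}$). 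This makes the lemma self-contained rather than dependent on an external reference, at the cost of the extra page of estimates the paper chose to omit. All the individual claims you use check out: the Lipschitz bounds on $\eta\mapsto\sqrt{1-\eta}$ are legitimate because $1-\eta=|\Psi|^2$ and $1-\eta_c=|u_{c,\kappa}|^2$ are bounded below after Step~1, the integrability $\int_\R(1+|x|)|\partial_xu_{c,\kappa}|^2\,dx<\infty$ follows from \eqref{eq:decaydark}, and the Lipschitz estimate on $\mathrm{Arg}$ near $u_{c,\kappa}(0)\neq 0$ is the same one the paper invokes via the principal logarithm.
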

	\begin{proof}
		For clarity's sake, we recall the main arguments of the proof in \cite{chironstab}.
		We remove $\kappa$ in the subscript of $u_{c,\kappa}=u_c$ and write $u_c=\sqrt{1-\eta_c}e^{i\theta_c}$. Since $u_{c}$ does not vanish, taking $\varepsilon>0$ small enough we deduce that for any $\Psi\in\mathcal{X}(\R)$ satisfying $d_\mathcal{X}(\Psi,u_c)\leq\varepsilon$, we have $|||\Psi|-|u_{c}|||_{ L^\infty(\R)}\leq\inf_{x\in\R}|u_c(x)|/2$ so that 
		\begin{equation}\label{eq:closinf}
			\inf_{x\in\R}|\Psi(x)|\geq \inf_{x\in\R}|u_c(x)|/2.
		\end{equation} We can express $\partial_x\Psi$ in terms of $\eta$ and $\theta$,
		\begin{equation}\label{eq:Psiprim}
			\partial_x\Psi=e^{i\theta}\left(\frac{\eta_x}{2\sqrt{1-\eta}}+i\sqrt{1-\eta}\theta_x\right),
		\end{equation} and similarly for $u_c$.
		We also need to bound $||\Psi||_{ L^\infty(\R)}$ in terms of $\varepsilon$, $||u_c||_{ L^\infty(\R)}$ and $||\partial_xu_c||_{ L^2(\R)}$. To do so, notice that for any constants $y<x$, we have
		\begin{equation}\label{eq:morrey}
			|\Psi(x)|\leq|\Psi(y)|+\int_{y}^x|\partial_x\Psi|\leq|u_c(y)|+||\Psi(y)|-|u_c(y)||+\sqrt{x-y}||\partial_x\Psi||_{ L^2(\R)},
		\end{equation} where we used Cauchy--Schwarz inequality to bound the integral. Taking $0<x-y<1$ then integrating \eqref{eq:morrey} with respect to $y$ from $x-1$ to $x$ we get
		$|\Psi(x)|\leq |u_c|+\int_{\R}||u_c|-|\Psi||+||\partial_x\Psi||_{ L^2(\R)},$
		so that
		\begin{equation}\label{eq:inftybound}
			||\Psi||_{ L^\infty(\R)}\leq K_0(u_c,\varepsilon).
		\end{equation} 
		We show that $d_{\mathrm{hy}}(\Psi,u_{c})\leq Kd_\mathcal{X}(\Psi,u_c).$ 
		First notice that $|\Psi|^2\theta_x=\Im(\Psi_x\bar{\Psi})$, thus
		$$\partial_x\theta-\partial_x\theta_c=\Im(\partial_xu_c\bar{u_c})\left(\frac{1}{|\Psi|^2}-\frac{1}{|u_c|^2}\right)+\frac{1}{|\Psi|^2}(\Im((\partial_x\Psi-\partial_xu_c)\bar{\Psi})-\Im(\partial_xu_c(\bar{u_c}-\bar{\Psi}))),$$
		
		\begin{flalign}\label{eq:boundthetax}
			\text{hence, }\hspace{1.3cm} ||\partial_x\theta-\partial_x\theta_c||_{ L^2(\R)}\leq& \frac{\Im(\partial_xu_c\bar{u_c})(||\Psi||_{ L^\infty(\R)}+||u_c||_{ L^\infty(\R)})}{\inf_{\R}|\Psi|^2\inf_\R|u_c|^2}|||\Psi|-|u_c|||_{ L^2(\R)}&&\notag\\
			&+\frac{1}{\inf_\R|\Psi|^2}\left(||\Psi||_{ L^\infty(\R)}||\partial_x\Psi-\partial_xu||_{ L^2(\R)}+||\partial_xu_c(\bar{u_c}-\bar{\Psi})||_{ L^2(\R)}\right).
		\end{flalign}
		Since $\Psi(x)-u_c(x)=\Psi(0)-u_c(0)+\int_{0}^x\partial_x\Psi-\partial_xu_c$ for all $x\in\R$, using the Cauchy--Schwarz inequality, we get
		\begin{equation}
			|\partial_xu_c(x)||\Psi(x)-u_c(x)|\leq|\partial_xu(x)||\Psi(0)-u_c(0)|+|\partial_{x}u_c(x)\sqrt{|x|}|||\partial_x\Psi-\partial_xu_c||_{ L^2(\R)},\quad\text{ for all }x\in\R.
		\end{equation}
		From the decay estimate~\eqref{eq:decaydark} satisfied by $\partial_xu_c$, we can bound the last term in \eqref{eq:boundthetax}, therefore using also \eqref{eq:closinf} and \eqref{eq:inftybound}, we deduce that there exists $K_1=K_1(\varepsilon,u_c)$ such that 
		\begin{equation}\label{eq:equivtheta}
			||\partial_x\theta-\partial_x\theta_c||\leq K_1d_{\mathrm{hy}}(\Psi,u_c).	
		\end{equation}
		Similarly, we have $2\Re(\Psi_x\bar\Psi)=\eta_x$, thus 
		\begin{align}
			||\partial_x\eta-\partial_x\eta_c||_{ L^2(\R)}&\leq2||\Psi||_{ L^\infty(\R)}||\partial_x\Psi-\partial_xu_c||_{ L^2(\R)}+2||\partial_x{u_c}(\bar{u_c}-\bar\Psi)||_{ L^2(\R)},\notag\\
			&\leq2K_0d_{\mathrm{hy}}(\Psi,u_c)+2||\partial_xu_c||_{ L^2(\R)}||\Psi(0)|-|u_c(0)||+||\sqrt{x}\partial_xu_c||_{ L^2(\R)}|| \partial_x\Psi-\partial_xu_c||_{ L^2(\R)}\notag,\\
			&\leq K_2 d_{\mathrm{hy}}(\Psi,u_c).
		\end{align}
		Then, we also have $||\eta-\eta_c||_{ L^2(\R)}=||(|\Psi|+|u_c|)(|\Psi|-|u_c|)||_{ L^2(\R)}\leq(||\Psi||_{ L^\infty(\R)}+||u_c||_{ L^\infty(\R)})|||\Psi|-|u_c|||_{ L^2(\R)}$.
		We bound the last term $|\mathrm{Arg}(\Psi(0)/u_c(0))|$. Letting $\mathrm{Log}$ be the principal determination of the logarithm defined in $\C\backslash(-\infty,0]$ and taking $\varepsilon>0$ small enough, we get $$|\mathrm{Arg}(\Psi(0)/u_c(0))|=|\Im (\mathrm{Log}(\Psi(0)/u_c(0)))|\leq \sup_{|z-1|\leq\varepsilon/u_c(0)}|1/z|\left|\frac{\Psi(0)-u_c(0))}{u_c(0)}\right|,$$ which ends to prove that $d_{\mathrm{hy}}(\Psi,u_c)\leq K d_{\mathcal{X}}(\Psi,u_c)$.
		Ideas along the same lines as in Lemma~10 in \cite{chiron-stability} show that $d_{\mathcal{X}}(\Psi,u_c)\leq K d_{\mathrm{hy}}(\Psi,u_c)$.
	\end{proof}
	%\begin{lemma}\label{lem:stabequiv}
	%	Let $\kappa<1/2$ $c\in(0,\sqrt{2})$ and $s>2+1/2$. Assume that $u_c\in\mathcal{X}(\R)$ is a dark soliton with $u_{c,\kappa}=\sqrt{1-\eta_{c,\kappa}}e^{i\theta_{c,\kappa}}$, then the two following statements hold.
	%	\begin{enumerate}
		%		\item If $(\eta_{c,\kappa},\partial_x\theta_{c,\kappa})$ is a stable blound state of \eqref{eq:twhydro} then $u_{c,\kappa}$ is orbitally stable in the sense of Definition~\ref{def:stabdef}~\eqref{item:stab}.
		%			\item If $(\eta_{c,\kappa},\partial_x\theta_{c,\kappa})$ is an unstable blound state of \eqref{eq:twhydro} then $u_{c,\kappa}$ is orbitally unstable in the sense of Definition~\ref{def:stabdef}~\eqref{item:unstab}.
		%	\end{enumerate}  
	%\end{lemma}
	%\begin{proof}
	%	We fix $\kappa<1/2$ and remove it from the subscripts.
	%	We just show that for any $\phi\in  H^{s}(\R)$ with $u_c+\phi=\sqrt{1-\eta}e^{i\theta}$ with $\partial_x\theta$, the distance $||(\eta,\partial_x\theta)-(\eta_c,v_c)||_X$ is equivalent to the pseudo-metric on $\mathcal{X}(\R)$ \begin{equation}d_2(u,u_{c})=\inf_{\varphi\in\R}d(u_{c}+\phi,u_{c}e^{i\varphi})^,
		%	\end{equation}, where $d$ is given by \eqref{def:metric}.
	%	Recall that formula~\eqref{eq:polarder} lets us write $u'$ only in term of $(\eta,\theta)$
	%\end{proof}
	To prove Corollary~\ref{cor:stab}, we shall recall some properties of the function $P_\kappa(c)= P(u_{c,\kappa}),\text{ for all }c\in(0,\sqrt2).$ 
	\begin{lemma}[Proposition~5.6 and Lemma~5.7 in \cite{delaire2023exotic}]\label{lem:varp}
		There exists $\kappa_0<0$ with $\kappa_0\approx-3.636$ such that the two following statements hold.
		\begin{enumerate}
			\item If $\kappa \in[\kappa_0,1/2)$, then $P_\kappa'<0$ in $(0,\sqrt{2})$.
			\item If $\kappa<\kappa_0$, then there exists $\tilde{c}_\kappa$ such that $P'_\kappa>0$ in $(0,\tilde{c}_\kappa)$ while $P'_\kappa<0$ in $(\tilde{c}_\kappa,\sqrt{2})$.
		\end{enumerate}
	\end{lemma}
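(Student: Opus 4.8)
The plan is to turn the profile equation \eqref{eq:etac} into a first integral, use it to write $P_\kappa(c)$ as an explicit one–dimensional integral, reduce that integral to elementary functions, and finally study the sign of $P_\kappa'(c)$ on the resulting closed form. The $\boC^1$ regularity of $c\mapsto P_\kappa(c)$ is immediate from the smoothness and decay in Theorem~\ref{thm:classiftw}, so the whole content is the sign analysis.

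First I would record the first integral of the profile ODE. Writing $a(\eta)=\tfrac{1-2\kappa+2\kappa\eta}{2(1-\eta)}$, one checks the algebraic identity $a'(\eta)=\tfrac{1}{2(1-\eta)^2}$, which is exactly twice the coefficient $\tfrac{1}{4(1-\eta)^2}$ of $(\partial_x\eta_c)^2$ in \eqref{eq:etac}. Hence the left-hand side of the profile equation equals $a(\eta_c)\partial_{xx}\eta_c+\tfrac12 a'(\eta_c)(\partial_x\eta_c)^2$, and multiplying by $\partial_x\eta_c$ and integrating from $x$ to $+\infty$ (using the decay \eqref{eq:decaydark}) yields $\tfrac12 a(\eta_c)(\partial_x\eta_c)^2=G(\eta_c)$ with $G(\eta)=\tfrac{\eta^2(2-2\eta-c^2)}{4(1-\eta)}$, i.e.
\[
(\partial_x\eta_c)^2=\frac{\eta_c^2\,(2-2\eta_c-c^2)}{1-2\kappa+2\kappa\eta_c}.
\]
The nonzero root $\eta=1-c^2/2$ of $G$ recovers the turning point $\eta_c(0)=1-c^2/2$ of Theorem~\ref{thm:classiftw}. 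Using $v_c=-c\eta_c/(2(1-\eta_c))$ from the first line of \eqref{eq:twhydro} in \eqref{phydro} gives $P_\kappa(c)=\tfrac{c}{2}\int_\R\tfrac{\eta_c^2}{1-\eta_c}=c\int_0^\infty\tfrac{\eta_c^2}{1-\eta_c}$, and the change of variables $x\mapsto\eta$ on $(0,\infty)$ (where $\partial_x\eta_c<0$) produces
\[
P_\kappa(c)=c\int_0^{1-c^2/2}\frac{\eta}{1-\eta}\sqrt{\frac{1-2\kappa+2\kappa\eta}{2-2\eta-c^2}}\,d\eta.
\]
The substitutions $\mu=1-\eta$ and then $\tau^2=2\mu-c^2$ turn this into a sum of standard integrands,
\[
P_\kappa(c)=2c\int_0^{\sqrt{2-c^2}}\frac{\sqrt{1-\kappa c^2-\kappa\tau^2}}{\tau^2+c^2}\,d\tau-c\int_0^{\sqrt{2-c^2}}\sqrt{1-\kappa c^2-\kappa\tau^2}\,d\tau,
\]
both elementary (the first reducing to $\atan$/$\operatorname{atanh}$, the second to $\arcsin$/$\operatorname{asinh}$, the branch depending on $\operatorname{sign}\kappa$). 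Here the hypothesis $\kappa<1/2$ is exactly what keeps the radicand positive, since it equals $1-2\kappa$ at the endpoint $\tau=\sqrt{2-c^2}$.

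With this closed form, the sign analysis proceeds from the boundary data $P_\kappa(\sqrt2^-)=0$ and $\lim_{c\to0^+}P_\kappa(c)$ a positive constant \emph{independent} of $\kappa$ (the $\kappa$-dependence enters only at order $c$), together with $P_\kappa>0$ on $(0,\sqrt2)$. Differentiating and letting $c\to0^+$ gives an explicit scalar quantity,
\[
P_\kappa'(0^+)=-\sqrt2+2\int_0^{\sqrt2}\frac{\sqrt{1-\kappa\tau^2}-1}{\tau^2}\,d\tau-\int_0^{\sqrt2}\sqrt{1-\kappa\tau^2}\,d\tau,
\]
which equals $-2\sqrt2<0$ at $\kappa=0$ and increases to $+\infty$ as $\kappa\to-\infty$; its unique zero is $\kappa_0\approx-3.636$. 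One then shows that $P_\kappa'<0$ on all of $(0,\sqrt2)$ when $\kappa\in[\kappa_0,1/2)$, and that $P_\kappa'$ changes sign exactly once, from $+$ to $-$ at some $\tilde c_\kappa$, when $\kappa<\kappa_0$ (consistent with $\tilde c_\kappa\to0$ as $\kappa\uparrow\kappa_0$).

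I expect the main obstacle to be this last point: controlling the \emph{number} of sign changes of $P_\kappa'$ uniformly in $(c,\kappa)$. The endpoint information (namely $P_\kappa'(\sqrt2^-)<0$ and the sign of $P_\kappa'(0^+)$ governed by $\kappa_0$) only fixes the parity of the count of interior zeros; proving there is at most one genuinely uses the explicit expression. A clean way is to rewrite $P_\kappa'(c)=0$ as $g(c,\kappa)=0$ for a function $g$ that is strictly monotone in $c$, so the implicit function theorem yields a single branch $\tilde c_\kappa$ together with its monotone dependence on $\kappa$; alternatively one establishes a convexity/sign property of $c\mapsto P_\kappa'(c)$ directly on the closed form. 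Once monotonicity is in place, pinning down the value $\kappa_0\approx-3.636$ is a routine numerical root-finding for the transcendental equation $P_\kappa'(0^+)=0$.
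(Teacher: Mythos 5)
Your overall route coincides with the paper's: this lemma is not proved in the paper itself but imported from Section~5 of \cite{delaire2023exotic}, where precisely your program is carried out --- compute $P(u_{c,\kappa})$ in closed form via the first integral of the profile equation, then analyze its variations. The computations you do perform are correct: $a'(\eta)=\tfrac{1}{2(1-\eta)^2}$ for $a(\eta)=\tfrac{1-2\kappa+2\kappa\eta}{2(1-\eta)}$ does yield $(\partial_x\eta_c)^2=\eta_c^2(2-2\eta_c-c^2)/(1-2\kappa+2\kappa\eta_c)$; the change of variables gives your integral formula for $P_\kappa(c)$; the limits $P_\kappa(\sqrt2^-)=0$ and $P_\kappa(0^+)=\pi$ (independent of $\kappa$) are right; and your expression for $P_\kappa'(0^+)$ is correct. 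Indeed, setting $k=-\kappa$ it evaluates to $\tfrac{4k-1}{2\sqrt k}\operatorname{asinh}(\sqrt{2k})-\tfrac{3\sqrt{1+2k}}{\sqrt2}$, which is strictly increasing in $k$ (the integrand's $k$-derivative is $\tfrac{2-\tau^2}{2\sqrt{1+k\tau^2}}>0$) and vanishes at $k\approx3.636$, so your characterization of $\kappa_0$ as the unique zero of $\kappa\mapsto P_\kappa'(0^+)$ is consistent with the stated value.

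The genuine gap is that the lemma's two assertions are global statements about the sign of $P_\kappa'$ on all of $(0,\sqrt2)$, while everything you actually prove concerns only the two endpoints. Endpoint signs fix the parity of the number of interior sign changes, as you yourself note, but nothing in the proposal rules out, for $\kappa\in[\kappa_0,1/2)$, an even number ($\geq 2$) of interior sign changes --- a ``bump'' where $P_\kappa$ increases in the middle of the interval --- which would falsify statement (i) even though $P_\kappa'(0^+)\leq0$ and $P_\kappa'(\sqrt2^-)\leq0$; likewise three or more sign changes are not excluded when $\kappa<\kappa_0$. The sentences ``One then shows that $P_\kappa'<0$ on all of $(0,\sqrt2)$\dots'' and ``a clean way is to rewrite $P_\kappa'(c)=0$ as $g(c,\kappa)=0$ for a function $g$ that is strictly monotone in $c$'' are exactly the missing proof: no such $g$ is exhibited, and no convexity or monotonicity property of the closed form (the $\arctan$/$\operatorname{asinh}$ expression) is verified. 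This global sign control in $c$, uniform in $\kappa$, is the actual content of Proposition~5.6 and Lemma~5.7 of \cite{delaire2023exotic} and is the step that genuinely uses the explicit formula; without it you have established the correct threshold equation for $\kappa_0$ and the local picture near $c=0$, but not the dichotomy (i)/(ii) that the lemma asserts. (A minor additional point: strict negativity $P_\kappa'(\sqrt2^-)<0$, which you invoke, also needs an argument beyond $P_\kappa>0=P_\kappa(\sqrt2^-)$, e.g.\ an expansion of the closed form at $c=\sqrt2$.)
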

	\appendix
	\section{Extensions of the stability result}\label{appendix}
	Finally, we briefly describe the Lyapunov functional approach to the stability of the dark solitons (see \cite{chiron-stability}). 
	For the sake of conciseness, we chose not to use this method in this work to avoid tedious computations.
	As pointed out by C.~A.~Stuart in Section~4--5 in \cite{stuartstab}, in the Hamiltonian system framework of \cite{grillakisshatah}, the stability criterion \eqref{eq:critstab} together with \textit{Assumptions~1--3} allow us to construct a Lyapunov function invariant by translation by taking $K$ large enough in 
	\begin{equation}\label{eq:lyap}
		V(\eta,v)=E(\eta,v)-cP(\eta,v)-E(u_{c})+cP(u_{c})+K(P(\eta,v)-P(u_{c}))^2,
	\end{equation} giving a quantitative measure of stability. The heuristic is the following, since $V$ is constant on the flow of solutions, to ensure the stability, we want $V$ to satisfy, for some $\alpha>0$ $$V(\eta,v)\geq\alpha ||(\eta,v)-(\eta_{c},v_{c}) ||^2_{X},$$ for some $\varepsilon>0$ and all $\eta$, $v$ satisfying $\inf_{s\in\R}||(\eta,v)-(\eta_{c},v_{c})(\cdot-s) ||_{X}<\varepsilon$. This problem reduces to the positive definiteness of $V''(\eta_{c},v_{c})$ which writes
	$$\langle V''(\eta_{c},v_{c})(\delta\eta,\delta v),(\delta\eta,\delta v)\rangle=\langle H_c(\delta\eta,\delta v),(\delta\eta,\delta v)\rangle+2K(\langle P'(u_{c}),(\delta\eta,\delta v)\rangle )^2 \geq C||(\delta\eta,\delta v) ||^2_{X},$$
	for some $C>0$.
	We readily compute $-P'(u_{c})=\partial_c(E'-cP')(u_{c})$ and since $E'(u_c)-cP'(u_{c})=0$ for every $c\in(0,\sqrt{2})$, we get $\partial_c(E'(u_c)-cP'(u_{c}))=H_c\partial_cu_{c}-P'(u_{c})=0$ by the chain rule. Testing this linear form against $\partial_c(\eta_c,v_c)\in X$, if \eqref{eq:critstab} holds, we deduce that,
	%\vspace{-0.2cm}
	\begin{equation}\label{eq:dcu}
		%\vspace{-0.2cm}
		\langle H_c\partial_c(\eta_c,v_c),\partial_c(\eta_c,v_c)\rangle=\frac{d}{dc}P(\eta_c,v_c)<0.
	\end{equation}
	By Lemma~5.3 in \cite{stuartstab}, considering \textit{Assumption~3}, we can assume the decomposition to be the orthogonal decomposition  
	\begin{equation}
		u=a\partial_c(\eta_c,v_c)+b\tau'(0)(\eta_c,v_c)+p, \quad\text{ for all }u\in X,
	\end{equation}
	where $P=\{\partial_c(\eta_c,v_c), \tau'(0)(\eta_c,v_c)\}^\perp$. Then we can show  that taking $K$ big enough in \eqref{eq:lyap} yield the coercivity of the quadratic form $V''$ on $\mathrm{span}\{\partial_c(\eta_c,v_c)\}$ which upon further computations leads to (see Lemma~4.3 in \cite{stuartstab}) 
	\begin{equation}
		\langle V''(\eta_c,v_c)(\delta\eta\delta v),(\delta\eta,\delta v )\rangle\geq C||\Pi(\delta\eta,\delta v)||_X^2,
	\end{equation}
	for some $C>0$ and where $\Pi$ is the orthogonal projection on $\{\tau'(0)(\eta_c,v_c)\}^\perp$.
	Finally if $(\delta\eta,\delta v)=(\eta-\eta_c,v-v_c)$ then we can show that (see Lemma~2.1 in \cite{stuartstab})
	\begin{equation}\label{eq:proj}
		||\Pi(\delta\eta,\delta v)||=\inf_{s\in\R}||(\eta,v)-(\eta_c,v_c)(\cdot-s)||_X.
	\end{equation} For any solution $(\eta,v)$ to \eqref{eq:hydrot}, one can readily bound from above $V((\eta,v))(t)$ for all $t$ by the quantity \eqref{eq:proj} at $t=0$. Therefore a solution starting close to the orbit of traveling waves cannot leave it, ensuring stability.
	%term  
	%Using the decomposition
	%By \textit{Assumption~3}, We know that $H$ has only one negative direction $\chi$ in the sense that 
	%\begin{equation}
	%	\langle H\chi,\chi\rangle<0,\quad\text{ and }\quad\langle Hz,z\rangle \geq C||z||_{ H^1(\R)\times L^2(\R)},
	%\end{equation}
	%for every 
	%$z\in \{\chi\}^\perp\cap(\ker H)^\perp\subset  H^1(\R)\times L^2(\R)$. The criterion~\eqref{eq:critstab} tells us that we can take $\chi$ to be $\partial_cu$ since \begin{equation}
		%	\langle H_c\partial_cu_{c,\kappa},\partial_cu_{c,\kappa}\rangle=\langle\frac{d}{dc}P(u_{c,\kappa})\partial_cu_{c,\kappa},\partial_cu_{c,\kappa}\rangle<0.
		%\end{equation}
		%\begin{equation}
		%	\langle H_c\chi,\chi\rangle<0,\quad\text{ and }\quad\langle Hz,z\rangle \geq C||z||^2_{ H^1(\R)\times L^2(\R)},
		%\end{equation} since $H_c$ has one negative direction, 
		This approach is implemented for the cubic-quintic nonlinear Schr\"odinger equation with nonzero conditions at infinity by I.~V.~Barashenkov in~\cite{barashenkovcrit}.

		\begin{merci}
			The author is grateful to Andr\'e de Laire, the reviewer and editors for helpful discussions and their interest in the publication of this paper. 
			The authors acknowledge support from the Labex CEMPI (ANR-11-LABX-0007-01). E.~Le~Quiniou was also supported by the R\'egion Hauts-de-France.
		\end{merci}

		%\section*{References}
		\bibliographystyle{abbrv}   

\begin{thebibliography}{10}
	
	\bibitem{angulospectral}
	J.~Angulo~Pava.
	\newblock {\em Nonlinear dispersive equations}, volume 156 of {\em Mathematical
		Surveys and Monographs}.
	\newblock American Mathematical Society, Providence, RI, 2009.
	\newblock Existence and stability of solitary and periodic travelling wave
	solutions.
	
	\bibitem{audiard2017}
	C.~Audiard.
	\newblock Small energy traveling waves for the {E}uler-{K}orteweg system.
	\newblock {\em Nonlinearity}, 30(9):3362--3399, 2017.
	
	\bibitem{barashenkovcrit}
	I.~V. Barashenkov.
	\newblock Stability criterion for dark solitons.
	\newblock {\em Phys. Rev. Lett.}, 77:1193--1197, Aug 1996.
	
	\bibitem{benzoniLWP}
	S.~Benzoni-Gavage, R.~Danchin, and S.~Descombes.
	\newblock On the well-posedness for the {E}uler-{K}orteweg model in several
	space dimensions.
	\newblock {\em Indiana Univ. Math. J.}, 56(4):1499--1579, 2007.
	
	\bibitem{benzoniStab}
	S.~Benzoni-Gavage, R.~Danchin, S.~Descombes, and D.~Jamet.
	\newblock Structure of {K}orteweg models and stability of diffuse interfaces.
	\newblock {\em Interfaces Free Bound.}, 7(4):371--414, 2005.
	
	\bibitem{bethuel2008existence}
	F.~B\'ethuel, P.~Gravejat, and J.-C. Saut.
	\newblock Existence and properties of travelling waves for the
	{G}ross-{P}itaevskii equation.
	\newblock In {\em Stationary and time dependent {G}ross-{P}itaevskii
		equations}, volume 473 of {\em Contemp. Math.}, pages 55--103. Amer. Math.
	Soc., Providence, RI, 2008.
	
	\bibitem{bethuel-black}
	F.~B{\'e}thuel, P.~Gravejat, J.-C. Saut, and D.~Smets.
	\newblock Orbital stability of the black soliton for the {G}ross-{P}itaevskii
	equation.
	\newblock {\em Indiana Univ. Math. J.}, 57(6):2611--2642, 2008.
	
	\bibitem{spectraltheorybook}
	C.~Cheverry and N.~Raymond.
	\newblock {\em A guide to spectral theory---applications and exercises}.
	\newblock Birkh\"{a}user Advanced Texts: Basler Lehrb\"{u}cher. [Birkh\"{a}user
	Advanced Texts: Basel Textbooks]. Birkh\"{a}user/Springer, Cham, [2021]
	\copyright 2021.
	\newblock With a foreword by Peter D. Hislop.
	
	\bibitem{chironexistence1d}
	D.~Chiron.
	\newblock Travelling waves for the nonlinear {S}chr\"odinger equation with
	general nonlinearity in dimension one.
	\newblock {\em Nonlinearity}, 25(3):813--850, 2012.
	
	\bibitem{chiron-stability}
	D.~Chiron.
	\newblock Stability and instability for subsonic traveling waves of the
	nonlinear {S}chr\"{o}dinger equation in dimension one.
	\newblock {\em Anal. PDE}, 6(6):1327--1420, 2013.
	
	\bibitem{chironstab}
	D.~Chiron.
	\newblock Stability and instability for subsonic traveling waves of the
	nonlinear {S}chr\"odinger equation in dimension one.
	\newblock {\em Anal. PDE}, 6(6):1327--1420, 2013.
	
	\bibitem{Colin2}
	M.~Colin, L.~Jeanjean, and M.~Squassina.
	\newblock Stability and instability results for standing waves of quasi-linear
	{S}chr\"{o}dinger equations.
	\newblock {\em Nonlinearity}, 23(6):1353--1385, 2010.
	
	\bibitem{deLGrSm1}
	A.~de~Laire, P.~Gravejat, and D.~Smets.
	\newblock Minimizing travelling waves for the {Gross}-{Pitaevskii} equation on
	$\mathbb{R} \times \mathbb{T}$, 2024.
	\newblock To appear in \textit{Ann. Fac. Sci. Toulouse Math.}
	
	\bibitem{delaire-mennuni}
	A.~de~Laire and P.~Mennuni.
	\newblock Traveling waves for some nonlocal 1{D} {G}ross-{P}itaevskii equations
	with nonzero conditions at infinity.
	\newblock {\em Discrete Contin. Dyn. Syst.}, 40(1):635--682, 2020.
	
	\bibitem{delaire2023exotic}
	A.~de~Laire and E.~L. Quiniou.
	\newblock Exotic traveling waves for a quasilinear {S}chr\"odinger equation
	with nonzero background, 2023.
	\newblock Preprint arXiv:2311.08918.
	
	\bibitem{fibich}
	G.~Fibich.
	\newblock {\em The nonlinear {S}chr\"{o}dinger equation}, volume 192 of {\em
		Applied Mathematical Sciences}.
	\newblock Springer, Cham, 2015.
	\newblock Singular solutions and optical collapse.
	
	\bibitem{grillakisshatah}
	M.~Grillakis, J.~Shatah, and W.~Strauss.
	\newblock Stability theory of solitary waves in the presence of symmetry. {I}.
	\newblock {\em J. Funct. Anal.}, 74(1):160--197, 1987.
	
	\bibitem{guosemilin}
	B.~L. Guo and Y.~P. Wu.
	\newblock Orbital stability of solitary waves for the nonlinear derivative
	{S}chr\"{o}dinger equation.
	\newblock {\em J. Differential Equations}, 123(1):35--55, 1995.
	
	\bibitem{ilievquasilinstab}
	I.~D. Iliev and K.~P. Kirchev.
	\newblock Stability and instability of solitary waves for one-dimensional
	singular {S}chr\"{o}dinger equations.
	\newblock {\em Differential Integral Equations}, 6(3):685--703, 1993.
	
	\bibitem{kevFrCa0}
	P.~Kevrekidis, D.~Frantzeskakis, and R.~Carretero-Gonz\'{a}lez.
	\newblock {\em The defocusing nonlinear {S}chr\"{o}dinger equation}.
	\newblock From dark solitons to vortices and vortex rings. Society for
	Industrial and Applied Mathematics, Philadelphia, 2015.
	
	\bibitem{kivshar}
	Y.~S. {Kivshar} and B.~{Luther-Davies}.
	\newblock {Dark optical solitons: physics and applications}.
	\newblock {\em Phys. Rep.}, 298(2-3):81--197, 1998.
	
	\bibitem{HorikisWnonloc}
	G.~N. Koutsokostas, T.~P. Horikis, P.~G. Kevrekidis, and D.~J. Frantzeskakis.
	\newblock Universal reductions and solitary waves of weakly nonlocal defocusing
	nonlinear {S}chr\"{o}dinger equations.
	\newblock {\em J. Phys. A}, 54(8):Paper No. 085702, 17, 2021.
	
	\bibitem{krolikowski2000}
	W.~Kr{\'o}likowski and O.~Bang.
	\newblock Solitons in nonlocal nonlinear media: Exact solutions.
	\newblock {\em Physical Review E}, 63(1):016610, 2000.
	
	\bibitem{kurihara}
	S.~Kurihara.
	\newblock Large-amplitude quasi-solitons in superfluid films.
	\newblock {\em Journal of the Physical Society of Japan}, 50(10):3262--3267,
	1981.
	
	\bibitem{linbubbles}
	Z.~Lin.
	\newblock Stability and instability of traveling solitonic bubbles.
	\newblock {\em Adv. Differential Equations}, 7(8):897--918, 2002.
	
	\bibitem{stuartstab}
	C.~A. Stuart.
	\newblock Lectures on the orbital stability of standing waves and application
	to the nonlinear {S}chr\"{o}dinger equation.
	\newblock {\em Milan J. Math.}, 76:329--399, 2008.
	
	\bibitem{vakhitovcrit}
	N.~G. {Vakhitov} and A.~A. {Kolokolov}.
	\newblock {Stationary Solutions of the Wave Equation in a Medium with
		Nonlinearity Saturation}.
	\newblock {\em Radiophysics and Quantum Electronics}, 16(7):783--789, July
	1973.
	
	\bibitem{walshStab}
	K.~Varholm, E.~Wahlén, and S.~Walsh.
	\newblock On the stability of solitary water waves with a point vortex.
	\newblock {\em Communications on Pure and Applied Mathematics},
	73(12):2634--2684, 2020.
	
	\bibitem{weinstein}
	M.~I. Weinstein.
	\newblock Lyapunov stability of ground states of nonlinear dispersive evolution
	equations.
	\newblock {\em Comm. Pure Appl. Math.}, 39(1):51--67, 1986.
	
\end{thebibliography}
 	
		%%-----------------------------
		%%      your bibliography
		%%-----------------------------
	\end{document}